\def \RR {\mathbb R}
\def \EE {\mathbb E}
\def \eps {\varepsilon}
\def \vphi {\varphi}
\def \cV {\mathcal V}
\def \cF {\mathcal F}
\def \cP {\mathcal P}
\def \cN {\mathcal N}
\newtheorem{theorem}{Theorem}[section]
\newtheorem{lemma}[theorem]{Lemma}
\newtheorem{proposition}[theorem]{Proposition}
\theoremstyle{definition}
\newtheorem{remark}[theorem]{Remark}
\def\myffrac#1#2 in #3{\raise 2.6pt\hbox{$#3 #1$}\mkern-1.5mu\raise 0.8pt\hbox{$
		#3/$}\mkern-1.1mu\lower 1.5pt\hbox{$#3 #2$}}
\def\qed{\hfill $\vcenter{\hrule height .3mm
		\hbox {\vrule width .3mm height 2.1mm \kern 2mm \vrule width .3mm
			height 2.1mm} \hrule height .3mm}$ \bigskip}
\def \id {{\rm Id}}
\def \cov {{ \rm Cov}\,}
\def \var {{ \rm Var}\,}
\def \susbeteq {\subseteq}
\begin{document}

\title{Spectral monotonicity under Gaussian convolution}
\author{Bo'az Klartag and Eli Putterman}
\date{}
\maketitle

\abstract{We show  that the Poincar\'e constant of a log-concave measure in Euclidean space
is monotone increasing along the heat flow. In fact, the entire spectrum of the associated Laplace operator
is monotone decreasing. Two proofs of these results are given. The first proof analyzes a curvature
term of a certain time-dependent diffusion, and the second proof constructs a contracting transport map
following the approach of Kim and Milman.}

\section{Introduction}
\label{sec1}

The Poincar\'e constant $C_{P}(\mu)$ of a Borel probability measure $\mu$ on $\RR^n$ is the smallest constant
$C \geq 0$ such that for any locally-Lipschitz function $f \in L^2(\mu)$,
$$\var_{\mu}(f) \leq C \cdot \int_{\RR^n} |\nabla f|^2 d \mu $$
where $\var_{\mu}(f) = \int_{\RR^n} f^2 d \mu - \left( \int_{\RR^n} f d \mu \right)^2$ and $| \cdot |$ is the Euclidean norm.
The Poincar\'e constant
governs the rate of convergence to equilibrium of the Langevin dynamics in velocity space \cite{LG}.

\medskip
Suppose that $\mu$ admits a smooth, positive density $\rho$ in $\RR^n$.
The Laplace operator associated with $\mu$, defined a priori on smooth, compactly supported functions $u: \RR^n \rightarrow \RR$, is given by
\begin{equation}  L u = L_{\mu} u = \Delta u + \nabla (\log \rho) \cdot \nabla u. \label{eq_1030}\end{equation}
It satisfies
$$ \int_{\RR^n} (L u) v d \mu = -\int_{\RR^n} \langle \nabla u, \nabla v \rangle d \mu $$
for any two smooth functions $u,v: \RR^n \rightarrow \RR$, one of which is compactly supported. The operator $L_{\mu}$ is essentially self-adjoint
in $L^2(\mu)$, negative semi-definite, with a simple eigenvalue at $0$ corresponding to the constant eigenfunction (see \cite[Corollary 3.2.2]{BGL}).
The Poincar\'e constant is given by
$$ C_P(\mu) = 1 / \lambda_1^{(\mu)}, $$ where
$\lambda_1^{(\mu)}$ is the {\it spectral gap} of $L$, the infimum over all positive $\lambda > 0$ that belong to the spectrum of $-L$.
Under mild regularity assumptions the spectrum of $L$ is discrete (e.g., when $\rho$ is $C^2$ and $\Delta(\sqrt{\rho}) / \sqrt{\rho}$ tends to infinity at infinity
\cite[Corollary 4.10.9]{BGL}, or when $\rho$ is log-concave and $|\log \rho(x)| / |x|$ tends to infinity at infinity, as shown in Appendix \ref{app:log_conc_disc} below).
In this case we write
$$ 0 = \lambda_0^{(\mu)} < \lambda_1^{(\mu)} \leq \lambda_2^{(\mu)} \leq \lambda_3^{(\mu)} \leq \ldots $$
for the eigenvalues of $-L$, repeated according to their multiplicity.

\medskip A non-negative function $\rho$ on $\RR^n$ is log-concave if
$K = \{ x \in \RR^n \, ; \, \rho(x) > 0 \}$ is convex,
and $\log \rho$ is concave in $K$.  An absolutely continuous probability measure on $\RR^n$
is called log-concave if it has a log-concave density.
An arbitrary probability measure on $\RR^n$
is called log-concave if it is the pushforward of some absolutely continuous log-concave probability measure on $\RR^k$
under an injective affine map. An example of a log-concave probability measure is $\gamma_s$,
the Gaussian probability measure on $\RR^n$ of mean zero and covariance $s \cdot \id$.
In a minor abuse of notation, we use $\gamma_s$ to denote also its density function $\gamma_s(x) = (2 \pi s)^{-n/2} \exp(-|x|^2/(2 s))$.
Another example of a log-concave probability measure is the uniform
probability measure on any convex body in $\RR^n$. The convolution of two log-concave probability measures
is again log-concave, as follows from the Pr\'ekopa-Leindler inequality \cite[Theorem 1.2.3]{BGVV} or from the earlier work
by Davidovi\v{c}, Korenbljum and Hacet \cite{DKH}.

\medskip The Poincar\'e constant is a particularly useful invariant in the class of log-concave probability measures.
For example, when $\mu$ is absolutely-continuous and log-concave, its Poincar\'e constant
is determined, up to a multiplicative universal constant, by the isoperimetric constant
$$ h(\mu) = \inf_{A \subseteq \RR^n} \frac{\int_{\partial A} \rho}{\min \{ \mu(A), 1 - \mu(A) \}}, $$
where the infimum runs over all open sets $A \susbeteq \RR^n$ with smooth boundary. Indeed, the Cheeger \cite{Ch}
and Buser-Ledoux \cite{Buser,Led} inequalities state that for any absolutely-continuous, log-concave probability measure $\mu$ on $\RR^n$,
$$ \frac{1}{4} \leq C_P(\mu) \cdot h^2(\mu) \leq 9. $$
A well-known conjecture by Kannan, Lov\'asz and Simonovits (KLS) states
that the Poincar\'e constant of a log-concave probability measure is equivalent, up to a multiplicative universal constant,
to the operator norm of the covariance matrix of $\mu$. See the recent paper by Chen  \cite{C} for more background and for the best known result towards this conjecture.

\medskip
Abbreviate $\gamma = \gamma_1$, the standard Gaussian measure in $\RR^n$,
whose Poincar\'e constant is  $C_P(\gamma) = 1$ (e.g., \cite[Proposition 4.1.1]{BGL}). It was proven by
Cattiaux and Guillin \cite[Theorem 9.4.3]{CG} that when $\mu$ is a log-concave probability measure,
\begin{equation}  C_P(\mu) \leq C_P(\mu * \gamma) + 1, \label{eq_1151}
\end{equation}
where $\mu * \gamma$ is the convolution of $\mu$ and $\gamma$. The reverse inequality $C_P(\mu) \geq C_P(\mu * \gamma) - 1$
is much easier to obtain and does not require log-concavity (see, e.g., \cite[Proposition 1]{BBN}). Our main result in this paper is an improvement upon (\ref{eq_1151}):

\begin{theorem} Let $\mu$ be a log-concave probability measure on $\RR^n$. Then,
\begin{equation}  C_p(\mu) \leq C_P(\mu * \gamma). \label{eq_1125} \end{equation}
Moreover, assuming that $\mu$ admits a density that is smooth and positive in $\RR^n$ and that $L_{\mu}$ has a discrete spectrum, we have
$$ \lambda_k^{(\mu * \gamma)} \leq \lambda_k^{(\mu)} \qquad \qquad (k=1,2,\ldots) $$
\label{thm_1115}
\end{theorem}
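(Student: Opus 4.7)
I will follow the Kim--Milman transport-map approach mentioned in the abstract: I construct a $1$-Lipschitz map $T : \RR^n \to \RR^n$ with $T_*(\mu * \gamma) = \mu$, then deduce both assertions of the theorem by pullback via the Courant--Fischer min-max principle. To construct $T$, I interpolate by setting $\mu_t := \mu * \gamma_t$ for $t \in [0,1]$, so that $\mu_0 = \mu$ and $\mu_1 = \mu * \gamma$. The density $\rho_t$ of $\mu_t$ satisfies the heat equation $\partial_t \rho_t = \tfrac{1}{2} \Delta \rho_t$, which I rewrite as the continuity equation $\partial_t \rho_t + \nabla \cdot (v_t \rho_t) = 0$ with Eulerian velocity $v_t(x) := -\tfrac{1}{2} \nabla \log \rho_t(x)$. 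Letting $\Phi_t$ be the flow of $v_t$ from $\Phi_0 = \id$, one has $(\Phi_t)_* \mu = \mu_t$, and my candidate map is $T := \Phi_1^{-1}$.

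The key claim is that $T$ is $1$-Lipschitz. Since $\mu_t$ is log-concave for every $t \geq 0$ as a convolution of log-concave measures, $\nabla^2 \log \rho_t \preceq 0$ pointwise. The Jacobian $M_t := D\Phi_t(x)$ along each trajectory obeys $\dot M_t = -\tfrac{1}{2} \nabla^2 \log \rho_t(\Phi_t(x))\, M_t$, whence
$$ \frac{d}{dt}(M_t^\top M_t) \,=\, -M_t^\top \nabla^2 \log \rho_t\bigl(\Phi_t(x)\bigr) M_t \,\succeq\, 0. $$
Starting from $M_0 = I$ we conclude $M_1^\top M_1 \succeq I$, so $DT = M_1^{-1}$ has operator norm at most $1$, and $T$ is $1$-Lipschitz as required.

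The spectral inequalities then follow by a standard pullback argument. The map $f \mapsto f \circ T$ is an isometric embedding of $L^2(\mu)$ into $L^2(\mu * \gamma)$ preserving constants and hence variances, and the $1$-Lipschitz property of $T$ yields $\int |\nabla(f \circ T)|^2\, d(\mu * \gamma) \leq \int |\nabla f|^2\, d\mu$. Pulling back the span of the first $k+1$ eigenfunctions of $-L_\mu$ produces a $(k+1)$-dimensional subspace of $H^1(\mu * \gamma)$ containing constants on which the Rayleigh quotient $\int |\nabla g|^2 d(\mu*\gamma)/\var_{\mu*\gamma}(g)$ is bounded by $\lambda_k^{(\mu)}$, so Courant--Fischer gives $\lambda_k^{(\mu*\gamma)} \leq \lambda_k^{(\mu)}$. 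The case $k=1$ is the Poincar\'e inequality under the smoothness assumptions; the general log-concave case follows by approximation, applying the smooth result to $\mu * \gamma_\delta$ and letting $\delta \to 0^+$, using the elementary bound $C_P(\nu * \gamma_\delta) \leq C_P(\nu) + \delta$ recalled in the introduction together with weak lower semicontinuity of $C_P$.

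The main obstacle I anticipate is the technical justification of the flow $\Phi_t$ on $[0,1]$. The drift $v_t = -\tfrac{1}{2} \nabla \log \rho_t$ is smooth for $t > 0$ by Gaussian smoothing but need not be globally Lipschitz, so one needs quantitative growth bounds on $|\nabla \log \rho_t|$ at infinity to rule out finite-time blow-up and guarantee that $\Phi_1$ is a diffeomorphism onto $\RR^n$. Log-concavity of $\rho_t$, together with the Gaussian tails it inherits from convolution with $\gamma_t$, should supply such bounds and allow the construction to go through.
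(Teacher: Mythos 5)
Your proposal takes the same route as the paper's second proof (\S\ref{sec_KM}): construct the reverse heat-flow map of Kim and Milman, show the time-$1$ flow $\Phi_1$ is an expansion because log-concavity forces $-\nabla^2\log\rho_t\succeq 0$, set $T=\Phi_1^{-1}$, and pull back a Courant--Fischer test space through $\vphi\mapsto\vphi\circ T$. The expansion computation, the pushforward identity $(\Phi_t)_*\mu=\mu_t$ via the continuity equation, and the pullback argument for $\lambda_k$ are all correct and match the paper's.

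The one genuine gap is precisely the item you flag at the end, and your proposed fix is not the right one. Log-concavity plus Gaussian tails of $\rho_t$ control the \emph{growth} of $\nabla\log\rho_t$ for $t>0$, but for global well-posedness of the flow starting at $t=0$ one needs a uniform-in-$t$ Lipschitz bound on the drift $v_t=-\tfrac12\nabla\log\rho_t$, i.e.\ a lower bound on $\nabla^2\log\rho_t$ that does not blow up as $t\to 0^+$. The bound $-\nabla^2\log\rho_t\preceq \tfrac1t\,\id$ coming from Gaussian smoothing degenerates at $t=0$, and a generic smooth positive log-concave $\rho$ (e.g.\ $\rho\propto e^{-|x|^4}$) has $\nabla^2\log\rho$ unbounded below at $t=0$. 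The paper resolves this by first imposing the extra hypothesis $-\nabla^2\log\rho\preceq\tfrac1\eps\,\id$ in \eqref{eq_520}, which via Brascamp--Lieb (\ref{eq_607})--(\ref{eq_608}) yields the uniform Lipschitz bound $\|DW_s\|_{op}\le\tfrac{1}{2(s+\eps)}$, and then passes to general log-concave $\mu$ by applying \cite[Lemma 3.3]{KM} to the contractions built for $\mu*\gamma_\eps$ -- an approximation at the level of transport maps, which in particular delivers the contraction $T$ (hence the full spectral statement) for smooth $\mu$ not satisfying \eqref{eq_520}. Your approximation, by contrast, works only at the level of Poincar\'e constants via $C_P(\nu*\gamma_\delta)\le C_P(\nu)+\delta$ and lower semicontinuity; that suffices for \eqref{eq_1125} but does not recover the map $T$ that your $\lambda_k$ argument needs in the smooth case. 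So either carry out the map-level approximation of \cite[Lemma 3.3]{KM}, or establish the flow's global existence under \eqref{eq_520} and treat general $\mu$ from there.
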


Two proofs of Theorem \ref{thm_1115} are presented here. One of these proofs utilizes a method from Kim and Milman \cite{KM}
to construct a contraction transporting $\mu * \gamma$ to $\mu$. Recall
that a map $T: \RR^n \rightarrow \RR^n$ is a contraction if $|T(x) - T(y)| \leq |x-y|$ for all $x, y\in \RR^n$.

\begin{theorem} Let $\mu$ be a log-concave probability measure on $\RR^n$. Then there exists
a contraction $T: \RR^n \rightarrow \RR^n$ that pushes forward $\mu * \gamma$ to $\mu$.
\label{thm_154}
\end{theorem}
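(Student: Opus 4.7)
The plan is to construct $T$ as the time-$1$ inverse of the flow associated with viewing the heat equation as a continuity equation, in the spirit of Kim--Milman. Let $\rho_t$ denote the density of $\mu_t := \mu * \gamma_t$; since $\partial_t \gamma_t = \tfrac{1}{2}\Delta \gamma_t$, it satisfies $\partial_t \rho_t = \tfrac{1}{2} \Delta \rho_t$, which can be rewritten as the continuity equation $\partial_t \rho_t + \nabla \cdot (\rho_t w_t) = 0$ for the velocity field
$$w_t(x) \;:=\; -\tfrac{1}{2}\, \nabla \log \rho_t(x).$$
Define $\Phi_t : \RR^n \to \RR^n$ by $\Phi_0 = \id$ and $\partial_t \Phi_t(x) = w_t(\Phi_t(x))$; then $(\Phi_t)_\ast \mu = \mu_t$ by uniqueness for the continuity equation, and I take $T := \Phi_1^{-1}$, which pushes $\mu * \gamma$ forward to $\mu$.

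The central computation shows that $\Phi_1$ is infinitesimally non-contracting. Setting $A_t := D\Phi_t$, differentiating the flow equation yields $\dot A_t = -\tfrac{1}{2}\,(\nabla^2 \log \rho_t)(\Phi_t)\, A_t$, and therefore
$$\frac{d}{dt}\bigl( A_t^T A_t \bigr) \;=\; -\, A_t^T \, (\nabla^2 \log \rho_t)(\Phi_t) \, A_t.$$
Log-concavity is preserved under convolution with Gaussians (Pr\'ekopa--Leindler, invoked already in the introduction), so $\rho_t$ is log-concave for each $t \geq 0$, $\nabla^2 \log \rho_t \preceq 0$ pointwise, and the right-hand side is positive semi-definite. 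Combined with $A_0 = I$ this forces $A_t^T A_t \succeq I$, so every singular value of $A_1$ is $\geq 1$. Assuming $\Phi_1$ is a diffeomorphism, the inverse $T = \Phi_1^{-1}$ has Jacobian $DT(y) = A_1(T(y))^{-1}$ with all singular values $\leq 1$, so $|DT(y) u| \leq |u|$ for all $y, u$, and integrating along line segments gives $|T(y) - T(y')| \leq |y - y'|$.

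All remaining work is regularity-theoretic. To bypass any need for smoothness or positivity of $\mu$ itself, I would approximate: set $\mu_\eps := \mu * \gamma_\eps$, whose density is smooth, strictly positive, and log-concave for every $\eps > 0$; apply the above to obtain a $1$-Lipschitz map $T_\eps$ pushing $\mu * \gamma_{1+\eps}$ forward to $\mu_\eps$; then pass $\eps \to 0^+$ via Arzel\`a--Ascoli on the equicontinuous family $\{T_\eps\}$ (after recentering each $T_\eps$ by $T_\eps(0)$, whose boundedness can be read off from the weak convergence $\mu_\eps \to \mu$), producing a $1$-Lipschitz limit $T$ with $T_\ast(\mu * \gamma) = \mu$. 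The point I expect to be the main obstacle is the smooth case itself, namely global existence and the diffeomorphism property of $\Phi_t$ on $[0,1] \times \RR^n$: the bound $A_t^T A_t \succeq I$ is only one-sided, so ruling out finite-time blow-up also requires an upper estimate on $\|A_t\|$, equivalently local Lipschitz control on $w_t$. For $t > 0$ the density $\rho_t$ is smooth on all of $\RR^n$, and I would combine log-concavity with pointwise estimates on $\nabla \log \rho_t$ (of Brascamp--Lieb or heat-kernel type) to obtain such uniform local Lipschitz bounds on each $[\delta, 1]$, closing the argument together with the approximation step after letting $\delta \to 0^+$.
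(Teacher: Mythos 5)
Your proposal follows exactly the Kim--Milman reverse heat flow construction that the paper uses: the same advection field $W_s = -\tfrac12\nabla\log\rho_s$, the same flow $\Phi_t$ with the same matrix ODE showing $A_t^*A_t\succeq\id$, and the same approximation $\mu_\eps = \mu*\gamma_\eps$ to handle general log-concave $\mu$ (the paper then invokes \cite[Lemma 3.3]{KM} for the limiting step, which plays the role of your Arzel\`a--Ascoli argument). The ``main obstacle'' you identify is resolved in the paper precisely along the lines you sketch: the approximant $\rho_\eps$ satisfies $-\nabla^2\log\rho_\eps\le\tfrac1\eps\,\id$, and by the Brascamp--Lieb (Cram\'er--Rao) bound on $\cov(p_{s,y})$ one gets $\|DW_s\|_{op}\le\tfrac{1}{2(s+\eps)}$ uniformly, which yields global existence of the flow and the diffeomorphism property.
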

This result is reminiscent of Caffarelli's theorem \cite{Caf}, which states that there is a contraction
pushing forward $\gamma$ to $\mu$ in the case where the density of $\mu$ with respect to the measure $\gamma$ is log-concave.
As is well-known, Theorem \ref{thm_154} implies that the Poincar\'e constant of $\mu$ is not larger
than that of $\mu * \gamma$. Moreover, as explained e.g. in Ledoux \cite[Proposition 1.2]{Led1},
it follows from Theorem \ref{thm_154} that when $\mu$ is an absolutely-continuous,
log-concave probability measure on $\RR^n$,
\begin{equation}
h(\mu) \geq h(\mu * \gamma). \label{eq_223}
\end{equation}
There is also a corresponding
inequality between the log-Sobolev constants of $\mu$ and $\mu * \gamma$,
or any other quantity involving a Rayleigh-type quotient, see Caffarelli \cite[Corollary 8]{Caf}.
We explain the proof of Theorem \ref{thm_154} and its implications in \S  \ref{sec_KM}.

\medskip We continue with a discussion of an additional proof of Theorem \ref{thm_1115}, which
was chronologically the first proof that we found. For $s > 0$ denote
$$ \mu_s = \mu * \gamma_s, $$
the evolution of the measure $\mu$ under the heat flow.
The log-concavity of $\mu$ implies that $\mu_s$ is log-concave as well. We will show that $C_P(\mu_s)$
is nondecreasing in $s$.  For a function $f: \RR^n \rightarrow \RR$ we
consider its evolution under the heat semigroup
$$P_s(f) = f * \gamma_s \qquad \qquad \qquad (s > 0) $$
whenever the integrals defining the convolution converge absolutely. Setting $P_0 = \id$ we obtain the heat semigroup $(P_s)_{s \geq 0}$, which satisfies the Euclidean heat equation
\begin{equation}  \frac{\partial}{\partial s} P_s f  = \frac{\Delta P_s f}{2}   \qquad \qquad \qquad (s > 0). \label{eq_1028} \end{equation}
The operator
$P_s: L^2(\mu_s) \rightarrow L^2(\mu)$ is a contraction operator with $P_s(1) = 1$ since
$$ \| P_s(f) \|_{L^2(\mu)}^2 = \int_{\RR^n} P_s(f)^2 \rho \leq \int_{\RR^n} P_s(f^2) \rho = \int_{\RR^n} f^2 P_s(\rho) = \| f \|_{L^2(\mu_s)}^2. $$
Write $\rho_s = \rho * \gamma_s$ for the  density of the probability measure $\mu_s$, which is a smooth positive function in $\RR^n$.
The adjoint operator $Q_s = P_s^*: L^2(\mu) \rightarrow L^2(\mu_s)$ is defined by $Q_0 \vphi = \vphi$ and
\begin{align}
Q_s \vphi &= \frac{P_s(\vphi \rho)}{\rho_s} \qquad \qquad \qquad (s > 0), \label{eq_945} \end{align}
and again it is a contraction operator with $Q_s(1) = 1$. It follows from (\ref{eq_1028}) and (\ref{eq_945}) that the evolution
equation for $Q_s$ is the parabolic equation
\begin{equation}  \frac{\partial}{\partial s} Q_s \vphi  = \frac{\Delta Q_s \vphi}{2}   + \nabla \log \rho_s \cdot \nabla Q_s \vphi.
\label{eq_1231} \end{equation}
We are thus led to define the ``box operator''
\begin{equation}  \Box_s u = \frac{\Delta u}{2} + \nabla (\log \rho_s) \cdot \nabla u. \label{eq_234} \end{equation}
This operator resembles the Laplace operator $L_s := L_{\mu_s}$. Indeed, we have
\begin{equation}
L_s = \Box_s + \frac{\Delta}{2}. \label{eq_955}
\end{equation}
The $\Box_s$ operator obeys a Bochner-type formula, which is unsurprising as $\Box_s$
equals half of the Laplace operator associated with
the log-concave probability measure whose density is proportional to $\rho_s^2$.
Indeed, we compute that for smooth $u,v: \RR^n \rightarrow \RR$,
\begin{equation} \Gamma_2(u,v) := \Box_s \langle \nabla u, \nabla v \rangle - \langle \nabla \Box_s u, \nabla v \rangle -
\langle \nabla  u, \nabla \Box_s v \rangle \label{eq_446} \end{equation} satisfies
\begin{equation} \Gamma_2(u,u) = \| \nabla^2 u \|_{HS}^2 - 2 \nabla^2 (\log \rho_s) \nabla u \cdot \nabla u, \label{eq_447} \end{equation}
where $\| \nabla^2 u \|_{HS}$ is the Hilbert-Schmidt norm of the Hessian matrix $\nabla^2 u$.
The expression in (\ref{eq_447}) is similar to the Bochner-type formula of the operator $L_s$, the main difference
being the factor $2$ in front of the second summand in (\ref{eq_447}), which is the ``curvature term.''
Moreover, setting $\Gamma_0(u,v) = uv$ and $\Gamma_1(u,v) = \nabla u \cdot \nabla v$,
we have
$$ \frac{d}{ds} \int_{\RR^n} \Gamma_i(Q_s \vphi, Q_s \vphi) d \mu_s = -\int_{\RR^n} \Gamma_{i+1}(Q_s \vphi, Q_s \vphi) d \mu_s \qquad \qquad (i=0,1), $$
under some regularity assumptions to be explained below. It follows that the Rayleigh quotient
$$ \frac{\int_{\RR^n} |\nabla Q_s \vphi|^2 d \mu_s}{\int_{\RR^n} (Q_s \vphi)^2 d \mu_s } $$
is non-increasing in $s \in (0, \infty)$. This fact, formulated as Theorem \ref{cor_612} below,
implies  Theorem \ref{thm_1115}. More details, explanations and rigourous proofs
are provided in \S \ref{sec2}.

\medskip
In \S \ref{sec3} we discuss conceptual aspects of the evolution $(Q_s \vphi)_{s \geq 0}$, and explain how it is equivalent to
{\it Eldan's stochastic localization} \cite{eldan, LV} and {\it F\"ollmer's drift} \cite{lehec}. We also provide a
Bayesian interpretation of this evolution, and explore various connections between these points of view.

\medskip
{\it Acknowledgement.}  Supported by a grant from the Israel Science Foundation (ISF).

\section{A dynamic variant of $\Gamma$-calculus}
\label{sec2}

In this section we prove Theorem \ref{thm_1115}. Consider the linear differential operator $\Box_s$ defined
by formula (\ref{eq_234}) above.
Similarly to the formalism from \cite{BGL}, for smooth functions $u, v: \RR^n \rightarrow \RR$ we
define $\Gamma_0(u,v) = uv$, and for $i \geq 0$ and $s > 0$,
\begin{equation} \Gamma_{i+1}(u,v) = \Box_s \Gamma_i(u, v ) - \Gamma_i(u, \Box_s v ) - \Gamma_i(\Box_s u, v ) - \frac{d \Gamma_i}{ds} (u, v). \label{eq_212} \end{equation}
Thus $\Gamma_1(u,v) = \nabla u \cdot \nabla v$ and  $\Gamma_2(u,v)$ coincides with definition (\ref{eq_446}) above.
The rationale for  definition (\ref{eq_212}) is that, under regularity assumptions stated below,
\begin{equation} \frac{d}{ds} \int_{\RR^n} \Gamma_i(\vphi_s, \vphi_s) d \mu_s  = -\int_{\RR^n} \Gamma_{i+1}(\vphi_s, \vphi_s) d \mu_s.
\label{eq_216} \end{equation}
where $\vphi_s = Q_s \vphi$. If we were allowed to ignore all regularity issues, \eqref{eq_216} could be proven as follows: differentiating under the integral sign and applying (\ref{eq_1028}) and (\ref{eq_1231}),
\begin{align} \nonumber \frac{d}{ds} \int_{\RR^n} & \Gamma_i(\vphi_s, \vphi_s) \rho_s  =  2 \int_{\RR^n}
\Gamma_i \left( \Box_s \vphi_s, \vphi_s \right) \rho_s + \int_{\RR^n}
\Gamma_i \left( \vphi_s, \vphi_s \right) \frac{\Delta \rho_s}{2} + \int_{\RR^n} \frac{d \Gamma_i}{ds} (\vphi_s, \vphi_s) \rho_s \\ & =
 \int_{\RR^n}
\left[ 2 \Gamma_i \left( \Box_s \vphi_s, \vphi_s \right)  + \frac{\Delta \Gamma_i \left( \vphi_s, \vphi_s \right)}{2}
 \right] d \mu_s + \int_{\RR^n} \frac{d \Gamma_i}{ds} (\vphi_s, \vphi_s) d \mu_s. \label{eq_602}
 \end{align}
Next we use (\ref{eq_955}) and the fact that $\int_{\RR^n} (L_s u) d \mu_s = 0$ under regularity assumptions (e.g., when $u$ is smooth and compactly supported). This yields
\begin{align} \nonumber \frac{d}{ds} \int_{\RR^n} \Gamma_i(\vphi_s, \vphi_s) \rho_s & =
 \int_{\RR^n}
\left[ 2 \Gamma_i \left( \Box_s \vphi_s, \vphi_s \right)  - \Box_s
\Gamma_i \left( \vphi_s, \vphi_s \right) \right] d \mu_s + \int_{\RR^n} \frac{d \Gamma_i}{ds} (\vphi_s, \vphi_s) d \mu_s \\ & = -\int_{\RR^n} \Gamma_{i+1}(\vphi_s, \vphi_s) d \mu_s. \label{eq_603}
 \end{align}
This would be a rigorous proof for (\ref{eq_216}) had we worked in the context of a compact Riemannian
manifold (which also has a heat kernel $P_s: L^2(\mu) \rightarrow L^2(\mu_s)$ and a corresponding adjoint $Q_s = P_s^*$).
However, in this paper we are interested in the non-compact situation of $\RR^n$, since we rely on the
fact that the heat flow preserves curvature conditions such as log-concavity,
which is currently known to hold only for a Euclidean space \cite{Koles}. Nevertheless, the operators $\Box_s$ and $(Q_s)_{s \geq 0}$ seem rather natural also in the Riemannian setting.

\medskip Our first task in this section is to rigorously justify \eqref{eq_216} for a fairly large class of functions $\vphi$. To do this, we shall express $Q_s$ explicitly as an integral operator.

\medskip Recall that we work with an absolutely-continuous, log-concave probability measure $\mu$ on $\RR^n$ having density $\rho$.
  As before, for $s > 0$ we write $\mu_s = \mu * \gamma_s$ and $\rho_s = P_s \rho = \rho * \gamma_s$, while the operator $Q_s$
  is defined via formula (\ref{eq_945}).
For $s > 0$ and $y \in \RR^n$ we define the probability density
 \begin{equation}  p_{s, y}(x) = e^{\frac{\langle y, x \rangle}{s} -  \frac{|x|^2}{2s} } \frac{\rho(x)}{Z_{s,y}} \label{eq_350} \end{equation}
where
\begin{equation}  Z_{s,y} = \int_{\RR^n} e^{\frac{\langle y, x \rangle}{s} -  \frac{|x|^2}{2s} } \rho(x) dx = \frac{\rho_{s}(y)}{\gamma_{s}(y)} \label{eq_350_} \end{equation}
  is a normalizing constant (the ``partition function''). In the next lemma we express the value of $Q_s \vphi$ at the point $y$
  as the average of $\vphi$ with respect to the density $p_{s,y}$.

\begin{lemma} Let $y \in \RR^n, s >0$ and suppose that $\vphi \in L^1(\mu)$, or more generally,
that $\vphi: \RR^n \rightarrow \RR$ is such that $\vphi(x) e^{-t |x|^2} \in L^1(\mu)$  for some $t \in (0, s)$. Then
\begin{equation} Q_s \vphi(y) = \int_{\RR^n} \vphi \cdot p_{s, y}. \label{eq_405} \end{equation}
Furthermore, $Q_s \vphi(y)$ is a smooth function of $y \in \RR^n$ and $s > 0$ which satisfies
\begin{align} \nabla Q_s \vphi & = \frac{Q_s( x \vphi) - Q_s(\vphi) Q_s(x)}{s}, \label{eq_404}  \\
\partial_s Q_s \vphi(y) & = -\frac{Q_s( f_y \vphi) - Q_s(\vphi) Q_s(f_y) }{s^2} = \Box_s Q_s \vphi(y), \label{eq_432} \end{align}
where $f_y(x) = \langle x, y \rangle - |x|^2 / 2$.
\label{lem_606}
\end{lemma}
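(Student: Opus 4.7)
The plan is to unwind the definition $Q_s \vphi = P_s(\vphi \rho)/\rho_s$ and read off the integral representation. Writing $|y-x|^2 = |y|^2 - 2\langle y,x\rangle + |x|^2$ gives the factorization $\gamma_s(y-x) = \gamma_s(y)\, e^{\langle y,x\rangle/s - |x|^2/(2s)}$, so
\begin{equation*}
  (\vphi\rho * \gamma_s)(y) = \gamma_s(y) \int_{\RR^n} \vphi(x)\, e^{\langle y,x\rangle/s - |x|^2/(2s)} \rho(x)\, dx,
\end{equation*}
and similarly $\rho_s(y) = \gamma_s(y)\, Z_{s,y}$; dividing yields (\ref{eq_405}). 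Absolute convergence is a standard Gaussian-domination argument: for any $\delta>0$ one has $\langle y, x\rangle/s \leq \delta |x|^2 + |y|^2/(4 s^2 \delta)$, and choosing $\delta$ small enough that the residual $|x|^2$-coefficient is $\le -t$ bounds the integrand in absolute value by $|\vphi(x)|\, e^{-t|x|^2}\rho(x)$ times a constant depending on $(y,s,\delta)$. This is legitimate under the stated growth hypothesis on $\vphi$.

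The same Gaussian-domination bound, applied now to formal derivatives of the exponent, legitimizes differentiation under the integral sign in both $y$ and $s>0$ and gives joint smoothness of $Q_s\vphi(y)$. For (\ref{eq_404}) I compute $\nabla_y \log p_{s,y}(x) = x/s - \nabla_y \log Z_{s,y}$, and observe that $\nabla_y \log Z_{s,y}$ is the $p_{s,y}$-expectation of $x/s$, namely $Q_s(x)(y)/s$ (here $Q_s$ is applied coordinate-wise). Integrating the resulting identity $\nabla_y p_{s,y}(x) = p_{s,y}(x)\,[x - Q_s(x)(y)]/s$ against $\vphi$ produces the claimed formula. The $s$-derivative is entirely parallel: $\partial_s[\langle y,x\rangle/s - |x|^2/(2s)] = -f_y(x)/s^2$ and $\partial_s \log Z_{s,y}$ equals the $p_{s,y}$-expectation of $-f_y/s^2$, so $\partial_s p_{s,y} = p_{s,y}\,[-f_y + Q_s(f_y)]/s^2$, which when integrated against $\vphi$ gives the first equality in (\ref{eq_432}).

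It remains to identify this expression with $\Box_s Q_s\vphi(y)$. The quickest route is to invoke the parabolic equation (\ref{eq_1231}), derived formally from (\ref{eq_1028}) and (\ref{eq_945}); now that the relevant derivatives of $Q_s\vphi$ are known to exist classically, that derivation becomes rigorous. A more self-contained verification is direct: apply (\ref{eq_404}) twice to write $\Delta Q_s\vphi(y)$ as a combination of the terms $(1/s^2)[Q_s(x_i^2 \vphi) - Q_s(x_i^2)Q_s(\vphi)]$ and $-(2/s^2)\, Q_s(x_i)\,[Q_s(x_i\vphi) - Q_s(x_i)Q_s(\vphi)]$; combine with the identity $\nabla \log \rho_s(y) = (Q_s(x)(y) - y)/s$, which falls out of $\rho_s = \gamma_s Z_{s,y}$; and watch the $Q_s(x)$-cross terms cancel, leaving exactly $-[Q_s(f_y\vphi) - Q_s(f_y)Q_s(\vphi)]/s^2$. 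The main obstacle in either route is keeping the integrability hypotheses uniform as one differentiates several times, since iterated $y$-differentiations bring down extra polynomial factors in $x$; but this is handled by the same Gaussian bound, shrinking $\delta$ slightly at each step.
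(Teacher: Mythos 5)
Your proof is correct and follows essentially the same route as the paper: unwind $Q_s\vphi = P_s(\vphi\rho)/\rho_s$ via the Gaussian factorization to obtain the $p_{s,y}$-integral representation, then differentiate under the integral sign. The paper asserts the validity of differentiation under the integral sign more briefly and leaves the derivation of \eqref{eq_404} and \eqref{eq_432} to the reader, whereas you spell out the Gaussian-domination bound and the computation verifying $\partial_s Q_s\vphi = \Box_s Q_s\vphi$; these added details are accurate and consistent with what the paper leaves implicit.
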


\begin{proof} According to (\ref{eq_350}) and (\ref{eq_350_}),
\begin{align} \rho_{s}( y ) \cdot \int_{\RR^n} \vphi \cdot p_{s, y}  \label{eq_958}
& = \int_{\RR^n} \vphi(x)  \cdot e^{\langle y, x \rangle / s-   |x|^2 / (2s)} \rho(x) dx \cdot \gamma_{s} (y) \\
 & =  \int_{\RR^n} \vphi(x) \rho(x) \cdot \gamma_s(y-x) dx = [(\vphi \rho) * \gamma_s](y) = P_s(\vphi \rho). \nonumber \end{align}
Now (\ref{eq_405}) follows from (\ref{eq_945}) and (\ref{eq_958}). The smoothness of
$Q_s \vphi$ and equations (\ref{eq_404}) and (\ref{eq_432})
follow by differentiating (\ref{eq_350_}) and (\ref{eq_405}) under the integral sign. This is legitimate, since
any partial derivative in the $(s,y)$-variables of the function $p_{s,y}(x) \vphi(x)$ is seen to be bounded by an integrable function,
and the bound is locally uniform in $s$ and $y$.
\end{proof}

By a multi-index $(k, \alpha)$ we mean a non-negative integer $k$ and a vector $\alpha = (\alpha_1,\ldots,\alpha_n)$ of nonnegative integers.
For a multi-index $(k,\alpha)$ and for a smooth function $f(s,y)$  we abbreviate
$$ \partial_s^{k} \partial_y^\alpha  f =  \left( \frac{\partial}{\partial s} \right)^{k}
\left( \frac{\partial}{\partial y_1} \right)^{\alpha_1}
\ldots  \left( \frac{\partial}{\partial y_n} \right)^{\alpha_n}  f(s,y) \qquad \qquad \qquad (s > 0, y \in \RR^n). $$
We denote $|\alpha| = \sum_i |\alpha_i|$. We say that a
measurable function $\vphi: \RR^n \rightarrow \RR$ has {\it subexponential decay relative to $\rho$} if for any $a > 0$ there exists $C > 0$ such that $|\vphi(x)| \le \frac{C}{\sqrt{ \rho(x)}} e^{-a |x|}$ for all $x \in \mathbb R^n$ for which $\rho(x) > 0$.

\begin{lemma}\label{rs_bds}
Fix $s > 0$ and let $(k, \alpha)$ be a multi-index. Then,
\begin{enumerate}
\item[(i)] The function
$\partial_s^{k} \partial_y^{\alpha} \log \rho_s(y)$ grows at most polynomially at infinity in $y \in \RR^n$.
\item[(ii)]
Let
$\vphi$ have subexponential decay relative to $\rho$. Then the
function $\partial_s^{k} \partial_y^{\alpha}  Q_s \vphi(y)$ has  subexponential decay relative to $\rho_s$.
\end{enumerate}
Moreover, if $s$ varies in an interval $[s_0, s_1]$ with $s_0 > 0$, then the implied constants in these two assertions
may be chosen not to depend on $s$.
\end{lemma}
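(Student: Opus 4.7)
The plan is to exploit the Bayesian interpretation implicit in (\ref{eq_350})--(\ref{eq_350_}): the density $p_{s,y}(x) \propto \rho(x) \gamma_s(x-y)$ is $(1/s)$-strongly log-concave, uniformly in $y \in \RR^n$. All the derivatives in question will be expressed as integrals of polynomial-in-$(x,y,1/s)$ expressions against $p_{s,y}$, so both assertions reduce to one quantitative estimate: for every $k \in \NN$, the moment $\int |x|^k\, p_{s,y}(x) dx$ grows at most polynomially in $|y|$, uniformly for $s$ in any fixed compact subinterval of $(0, \infty)$.

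I would prove this moment bound in two steps. Let $x^*(y)$ be the unique maximizer of $p_{s,y}$; its first-order condition $\nabla(-\log\rho)(x^*) = (y - x^*)/s$, combined with monotonicity of the gradient of the convex function $-\log \rho$, gives $|x^*(y_1) - x^*(y_2)| \leq |y_1 - y_2|$, and in particular $|x^*(y)| \leq |x^*(0)| + |y|$. Next, writing $W(x) = -\log p_{s,y}(x) + \log Z_{s,y}$ (so $\nabla^2 W \geq I/s$), integration by parts with the test field $|x - x^*|^{2(k-1)}(x - x^*)$ yields the identity
$$ \int |x - x^*|^{2(k-1)} (x - x^*) \cdot \nabla W(x)\, p_{s,y}(x) dx = (n + 2(k-1)) \int |x-x^*|^{2(k-1)} p_{s,y}(x) dx, $$
and combined with the strong-convexity bound $(x - x^*) \cdot \nabla W(x) \geq |x - x^*|^2/s$, induction on $k$ gives $\int |x - x^*(y)|^{2k} p_{s,y}(x) dx \leq C_{k,n}\, s^k$. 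The triangle inequality then delivers $\int |x|^k p_{s,y}(x) dx \leq C_k(1 + |y|)^k$, uniformly for $s$ in compact subintervals.

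For part (i), I would write $\log \rho_s(y) = \log Z_{s,y} - |y|^2/(2s) - (n/2)\log(2\pi s)$ and compute $\partial_s^k \partial_y^\alpha \log Z_{s,y}$ by differentiation under the integral, expressing the result via the usual cumulant expansion as a polynomial in the moments of $p_{s,y}$ just bounded. For part (ii), I would differentiate $Q_s\vphi(y) = \rho_s(y)^{-1} \int \vphi(x)\rho(x) \gamma_s(y-x) dx$ directly, obtaining
$$ \partial_s^k \partial_y^\alpha Q_s\vphi(y) = \rho_s(y)^{-1}\int \vphi(x)\rho(x)\, S_{k,\alpha}(s,y,x) \gamma_s(y-x) dx, $$
where $S_{k,\alpha}$ is a polynomial in $x, y, 1/s$ and in the derivatives of $\log \rho_s$ controlled by part (i); hence $|S_{k,\alpha}(s,y,x)| \leq P(|y|) Q(|x - y|)$ for polynomials $P, Q$. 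Using the subexponential bound $|\vphi(x)| \leq C_{a'} e^{-a'|x|}/\sqrt{\rho(x)}$ with $a' > a$ and the Cauchy-Schwarz splitting $\sqrt{\rho(x)\gamma_s(y-x)} \cdot [Q(|x-y|) e^{-a'|x|} \sqrt{\gamma_s(y-x)}]$, one bounds the integral by $\sqrt{\rho_s(y)} \cdot C e^{-a'|y|}$ (since the residual Gaussian expectation $\int Q(|z|)^2 e^{2a'|z|} \gamma_s(z) dz$ is finite), and the leading polynomial $P(|y|)$ is absorbed into $e^{-a|y|}$ by choosing $a'$ slightly above $a$. The main obstacle is the polynomial moment bound; the decisive inputs---the $1$-Lipschitz MAP $y \mapsto x^*(y)$ and the integration-by-parts identity---both rely essentially on the convexity of $-\log \rho$.
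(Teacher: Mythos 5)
Your overall scheme matches the paper's in two ways: both reduce the lemma to the polynomial moment bound $\int |x|^d\, p_{s,y}(x)\,dx \le C_d(1+|y|)^d$, and your treatment of part (ii) via a Cauchy--Schwarz splitting, absorbing the polynomial-in-$|y|$ factors coming from derivatives of $\log\rho_s$ into a slightly faster exponential, is essentially the paper's argument (the paper packages the differentiation through the recursive formulas (\ref{eq_404})--(\ref{eq_432}) rather than by differentiating the kernel, but this is cosmetic). The gap is in your proof of the moment bound itself. You anchor at the mode $x^*(y)$ of $p_{s,y}$, invoking the first-order condition $\nabla(-\log\rho)(x^*) = (y - x^*)/s$, the pointwise Hessian lower bound $\nabla^2 W \ge \frac{1}{s}\mathrm{Id}$, and a boundary-term-free integration by parts. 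But the lemma is stated, and used (in Proposition~\ref{prop_542} and Theorem~\ref{cor_612}), for a merely log-concave density $\rho$, which need not be smooth nor positive on all of $\RR^n$. If $\rho$ is, say, the uniform density on a cube and $|y|$ is large, then $x^*(y)$ sits on $\partial\{\rho>0\}$, the first-order condition fails, and your integration by parts picks up boundary terms; so each step of your chain breaks. The statement you want is still true (the mode map is the metric projection onto a convex set, hence $1$-Lipschitz), but your derivation does not establish it at the stated generality. A fix is to anchor at the mean $a(s,y) = Q_s(x)(y)$ instead: differentiating (\ref{eq_405}) in $y$ gives $D_y a = \mathrm{cov}(p_{s,y})/s \le \mathrm{Id}$ by Brascamp--Lieb, so the mean is $1$-Lipschitz in $y$ without any smoothness of $\rho$, and centered moments are controlled by sub-Gaussian concentration of $(1/s)$-strongly log-concave measures.

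It is worth noting that the paper's route to the moment bound avoids convexity entirely. Writing $Q_s(|x|^d)(y)$ as the ratio $\int |x|^d e^{-|x-y|^2/(2s)}\rho \,/\, \int e^{-|x-y|^2/(2s)}\rho$ and using $|x|^d \le 2^d(|y|^d + |x-y|^d)$, the paper bounds the $|x-y|^d$ contribution by $\int |x-y|^d \rho \,/\, \int \rho$ via a Chebyshev-type correlation inequality: $|x-y|^d$ is increasing and $e^{-|x-y|^2/(2s)}$ decreasing in $|x-y|$, so they are negatively correlated with respect to any weight. The only input is that $\rho$ has all moments finite (which follows from its exponential decay), and the resulting constant is uniform over \emph{all} $s > 0$, not merely over compact subintervals. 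Your strongly-log-concave concentration argument is heavier machinery that, even after the fix above, yields a weaker uniformity in $s$.
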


\begin{proof} Using the heat equation, we can replace time derivatives of $\rho_s$ by space derivatives, so in (i) we only need consider space derivatives of $\log \rho_s$. By differentiating (\ref{eq_350_})  with respect to $y$ we see that
$$ \nabla_y \log \rho_s(y)  = -\frac{y}{s} + \nabla_y \log Z_{s,y}  = \frac{Q_s(x)(y) - y}{s}.  $$
Repeated differentiations show that conclusion (i) would follow once we prove the following claim: for any $d > 0$, the function $Q_s(|x|^d)(y)$ grows at most polynomially at infinity as a function of $y \in \RR^n$,
with the implied constants not depending on $s \in [s_0, s_1]$.

\medskip Let us prove this claim. Since $\rho$ is an integrable, log-concave function, there exist $a, b > 0$ such that
\begin{equation} \rho(x) \leq a e^{-b|x|} \qquad \qquad \text{for all} \ x \in \RR^n. \label{eq_208} \end{equation}
 (see e.g., \cite[Lemma 2.2.1]{BGVV}). In particular $C_d := \int_{\RR^n} (1+|x|)^d \rho(x) dx < \infty$. Therefore, for any $y \in \RR^n$ and $s > 0$,
$$ \frac{\int_{\RR^n} |x-y|^d e^{-|x-y|^2 / (2s)} \rho(x) dx }{\int_{\RR^n} e^{-|x-y|^2 / (2s)} \rho(x) dx}
\leq \frac{\int_{\RR^n} |x-y|^d \rho(x) dx }{\int_{\RR^n} \rho(x) dx} \leq C_d (1 + |y|)^d, $$
where the first inequality follows
from the fact that $|x-y|^d$ is increasing in $|x-y|$ while $e^{-|x-y|^2/(2s)}$ is decreasing in $|x-y|$, and the second inequality uses $|x-y| \leq (1+|x|) (1 + |y|)$.
Consequently, as $|x|^d \leq 2^d( |y|^d + |x-y|^d )$,
\begin{align*}
Q_s(|x|^d)(y) = \int_{\RR^n} |x|^d p_{s,y}(x) dx = \frac{\int_{\RR^n} |x|^d e^{-|x-y|^2 / (2s)} \rho(x) dx }{\int_{\RR^n} e^{-|x-y|^2 / (2s)} \rho(x) dx} \leq \tilde{C}_d (1 + |y|)^{d},
\end{align*}
for some coefficient $\tilde{C}_d$ depending only on $\rho$ and on $d$. This shows that $Q_s(|x|^d)(y)$ grows at most polynomially, from which (i) follows.

\medskip We move on to the proof of (ii). Given $a > 0$, let $C > 0 $ be such that $\sqrt{\rho(x)} \cdot |\vphi(x)| \le C e^{-a |x|}$
for all $x$. From (\ref{eq_945}),
\begin{equation}|\vphi_s| \le \frac{C}{P_s(\rho)} P_s( \sqrt{\rho(x)} e^{-a |x|}) \le \frac{C}{P_s(\rho)} \cdot \sqrt{ P_s(\rho)  P_s(e^{-2a|x|}) } = \frac{C}{\sqrt{\rho_s}} P_s(e^{-2a|x|})^{\frac{1}{2}} \label{eq_404_}
\end{equation}
where we have used the Cauchy-Schwarz inequality for $P_s$. In order to conclude that $\vphi_s$ has
subexponential decay relative to $\rho_s$, it remains only to note the following: since $P_s$ is convolution with a Gaussian of covariance $s \cdot \id$, there exists $\tilde{C} = \tilde{C}_{a,s,n} > 0$ such that $P_s(e^{-2 a|x|})(y) \le \tilde{C} e^{-2a |y|}$ for all $y \in \RR^n$. (The constant $\tilde{C} = \sup_{s \in [s_0, s_1]} \int_{\RR^n} e^{2 a |x|} \gamma_s(x) dx$ works for all $s \in [s_0, s_1]$.)

\medskip We still need to bound the partial derivatives of $\vphi_s(y)$ with respect to the $s$-variable and $y$-variables.
The first-order derivatives are given by formulas (\ref{eq_404}) and (\ref{eq_432}), and higher-order
derivatives may be computed by repeated applications of these two formulas. Thus
$\partial_s^{k} \partial_y^{\alpha}  Q_s \vphi(y)$ can be expressed as a sum with a fixed number of summands.
Each of these summands is a product of a term of the form $\frac{1}{s^m} Q_s(f \vphi)$, where $f$ is a polynomial of degree bounded by $2k + |\alpha|$, and terms of the form $Q_s(p)$ with $p$ a polynomial in the space variables. For any such $p$,
the function $Q_s(p)$ grows at most polynomially because $Q_s(|x|^d)$ does for all $d$. In addition, $f \vphi$ has subexponential decay relative to $\rho$, so by the previous part of the proof, $Q_s(f \vphi)$ has subexponential decay relative to $\rho_s$. Consequently, each of the summands in $\partial_s^{k} \partial_y^{\alpha}  Q_s \vphi(y)$ has subexponential decay relative to $\rho_s$, so $\partial_s^{k} \partial_y^{\alpha}  Q_s \vphi(y)$ does as well.
\end{proof}

Recall the definition (\ref{eq_212}) of $\Gamma_i(u,v)$. In the next proposition we rigorously justify the computations in (\ref{eq_602}) and (\ref{eq_603}). We discuss only the case $i=0,1$; while the extension to higher-order carr\'es des champs presents no particular difficulty, it has been omitted as it is unnecessary for our purposes.

\begin{proposition} Fix $s > 0$ and $i=0,1$. Suppose that $\vphi$ has subexponential decay relative to $\rho$ and $\vphi_s = Q_s(\vphi)$. Then
equation (\ref{eq_216}) holds for $\vphi_s$.
 \label{prop_542}
\end{proposition}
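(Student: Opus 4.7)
The plan is to run the formal computation in (\ref{eq_602})--(\ref{eq_603}) and justify each step using the decay and smoothness bounds of Lemma \ref{rs_bds}. Since in the cases $i=0,1$ the object $\Gamma_i$ has no explicit $s$-dependence (so $d\Gamma_i/ds$ vanishes), the task splits into three pieces: differentiating under the integral sign in $\frac{d}{ds}\int \Gamma_i(\vphi_s,\vphi_s)\rho_s$, substituting the parabolic evolutions (\ref{eq_1028}) and (\ref{eq_1231}), and integrating by parts to exchange $\Delta/2$ for $-\Box_s+L_s$, then killing the $L_s$-piece via $\int L_s u\,d\mu_s = 0$.

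First I would collect the quantitative estimates. By Lemma \ref{rs_bds}(ii), every partial derivative $\partial_s^{k}\partial_y^{\alpha}\vphi_s$ of bounded order is dominated by $C_{a}e^{-a|y|}/\sqrt{\rho_s(y)}$ for arbitrary $a>0$, uniformly in $s$ on any compact subinterval of $(0,\infty)$. Lemma \ref{rs_bds}(i) simultaneously controls derivatives of $\log\rho_s$ by a polynomial in $|y|$. Hence the integrands $\Gamma_0(\vphi_s,\vphi_s)=\vphi_s^2$, $\Gamma_1(\vphi_s,\vphi_s)=|\nabla \vphi_s|^2$, and $\Gamma_2(\vphi_s,\vphi_s)=\|\nabla^2\vphi_s\|_{HS}^2-2\nabla^2(\log\rho_s)\nabla\vphi_s\cdot\nabla\vphi_s$, together with their first $s$- and $y$-derivatives, all satisfy $|\cdot|\le C_a e^{-a|y|}/\rho_s(y)$. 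Multiplying by $\rho_s$ gives an exponentially-decaying integrable majorant, locally uniform in $s$, which legitimizes both the interchange $\partial_s\int=\int\partial_s$ in (\ref{eq_602}) and the dominated-convergence passage from ball-integrals to full-space integrals needed below.

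Second I would carry out the integration by parts rigorously. For $u=\Gamma_i(\vphi_s,\vphi_s)$, the pointwise identity $(L_s u)\rho_s=\div(\rho_s\nabla u)$ on a ball $B_R$ yields, by the divergence theorem,
$$ \int_{B_R} L_s u\,d\mu_s=\int_{\partial B_R}\rho_s\,\nabla u\cdot\nu\,d\sigma. $$
The boundary integrand is bounded by $C_a e^{-aR}R^{n-1}$ for arbitrary $a>0$, so it tends to $0$ as $R\to\infty$, giving $\int L_s u\,d\mu_s=0$. Applying this to $u=\Gamma_i(\vphi_s,\vphi_s)$ and substituting identity (\ref{eq_955}) converts $\int\frac{\Delta u}{2}\,d\mu_s=\int(L_s-\Box_s)u\,d\mu_s$ into $-\int\Box_s u\,d\mu_s$; combined with (\ref{eq_602}), this gives exactly $-\int\Gamma_{i+1}(\vphi_s,\vphi_s)\,d\mu_s$ on the right, which is (\ref{eq_216}).

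The main obstacle is essentially bookkeeping rather than any single deep step: one must track the constants carefully enough to be sure that mixed products like $\nabla(\log\rho_s)\cdot\nabla\vphi_s\cdot\vphi_s$, in which only polynomial growth of $\nabla\log\rho_s$ is offset against subexponential decay of the $\vphi_s$-factors, are dominated by an integrable function uniformly in $s$ on compact intervals. The uniformity asserted in Lemma \ref{rs_bds} is precisely tailored to make this verification routine, so no genuinely new ideas should be required beyond assembling the estimates.
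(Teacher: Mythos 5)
Your proposal is correct and follows essentially the same route as the paper: differentiate under the integral using the locally uniform subexponential-decay bounds of Lemma \ref{rs_bds}, substitute the evolutions $\partial_s \vphi_s = \Box_s \vphi_s$ and $\partial_s \rho_s = \tfrac{1}{2}\Delta\rho_s$, and then integrate by parts, using $\tfrac{\Delta}{2}=L_s-\Box_s$ together with $\int L_s u\,d\mu_s=0$ (justified by the exponentially-decaying boundary terms) to land on $-\int \Gamma_{i+1}(\vphi_s,\vphi_s)\,d\mu_s$. The only point you leave slightly implicit is the integration by parts converting $\int \Gamma_i(\vphi_s,\vphi_s)\,\tfrac{\Delta\rho_s}{2}$ into $\int \tfrac{\Delta\Gamma_i(\vphi_s,\vphi_s)}{2}\,\rho_s$ (with boundary integrands $\Gamma_i\nabla\rho_s$ and $\nabla\Gamma_i\cdot\rho_s$), but the estimates you assemble cover it, so this is a presentational rather than mathematical gap.
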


\begin{proof} Recall that $\Gamma_0(u,v) = uv, \, \Gamma_1(u,v) = \nabla u \cdot \nabla v$
and that $\vphi_s(y)$ is smooth in $(s,y)$.
Therefore,
\begin{equation} \partial_s \left[ \Gamma_i(\vphi_s, \vphi_s) \rho_s \right]
= 2 \Gamma_i (\partial_s \vphi_s, \vphi_s) \rho_s + \Gamma_i(\vphi_s, \vphi_s) \rho_s \cdot \partial_s(\log \rho_s). \label{eq_446_} \end{equation}
According to Lemma \ref{rs_bds} we may bound the expression in (\ref{eq_446_})
by the integrable function $C e^{-a |y|}$ for some $C, a > 0$, and the bound is locally uniform in $s$. This justifies interchanging differentiation and integration to obtain
\begin{align*} \partial_s \int_{\RR^n} \Gamma_i(\vphi_s, \vphi_s) \rho_s &= \int_{\RR^n} \partial_s \left[ \Gamma_i(\vphi_s, \vphi_s) \rho_s \right]  \\
&= 2 \int_{\RR^n}
\Gamma_i \left( \Box_s \vphi_s, \vphi_s \right) \rho_s + \int_{\RR^n}
\Gamma_i \left( \vphi_s, \vphi_s \right) \frac{\Delta \rho_s}{2},
\end{align*}
where we have used Lemma \ref{lem_606} and the heat equation (\ref{eq_1028}). Next we need to carry out the integrations by parts of (\ref{eq_602}) and (\ref{eq_603}) and show that no boundary terms arise. When integrating the term $\Gamma_i(\vphi_s, \vphi_s) \Delta \rho_s /2$ by parts twice, we encounter the boundary integrands $\Gamma_i(\vphi_s, \vphi_s)  \nabla \rho_s$ and $\nabla \Gamma_i(\vphi_s, \vphi_s) \cdot \rho_s$. Both of these decay exponentially at infinity, so the integration by parts over $\mathbb R^n$ introduces no boundary terms, verifying (\ref{eq_602}). In (\ref{eq_603}), we use the integration by parts formula
$$ \int_{\RR^n} L_s \Gamma_i(\vphi_s, \vphi_s) \cdot \rho_s = \int_{\RR^n} {\rm div}( \rho_s \nabla \Gamma_i(\vphi_s, \vphi_s) ) = 0, $$
which is again justified by the exponential decay of $\nabla \Gamma_i(\vphi_s, \vphi_s) \cdot \rho_s$ at infinity. This completes the proof
of (\ref{eq_216}).
\end{proof}

We write $H^1(\mu)$ for the space of all functions in $L^2(\mu)$ whose weak derivatives belong to $L^2(\mu)$, equipped with the norm
$$ \| f \|_{H^1(\mu)} = \sqrt{ \int_{\RR^n}  f^2 d \mu + \int_{\RR^n}  |\nabla f|^2 d \mu}. $$
See e.g. the appendix of \cite{BK} and the references therein for information about weak derivatives, the Sobolev space $H^1(\mu)$,
and for a proof of the fact that the space of smooth, compactly supported functions in $\RR^n$ is dense in $H^1(\mu)$.

\begin{theorem} Let $\mu$ be an absolutely-continuous, log-concave probability measure on $\RR^n$ and let $0 \not \equiv \vphi \in H^1(\mu)$.  Then with $\vphi_s = Q_s \vphi$, the Rayleigh quotient
\begin{equation} R_{\vphi}(s) = \frac{\int_{\RR^n} |\nabla \vphi_s |^2 d \mu_s}{\int_{\RR^n} \vphi_s^2 d \mu_s}
\label{eq_406} \end{equation}
is non-increasing in $s \in [0, \infty)$. Consequently, the function
 $\log \| \vphi_s \|_{L^2(\mu_s)}$ is convex in $s \in [0,\infty)$.
  \label{cor_612}
\end{theorem}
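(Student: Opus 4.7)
The plan is to differentiate the Rayleigh quotient $R_\vphi(s) = N(s)/D(s)$, with $N(s) = \int_{\RR^n} |\nabla \vphi_s|^2 \, d\mu_s$ and $D(s) = \int_{\RR^n} \vphi_s^2 \, d\mu_s$, and to show $R'(s) \le 0$. By the density of $C_c^\infty(\RR^n)$ in $H^1(\mu)$ cited in the excerpt, I would first reduce to smooth, compactly supported $\vphi$ and extend by limits afterward. For such a $\vphi$, the function $\vphi_s$ has subexponential decay by Lemma \ref{rs_bds}, so Proposition \ref{prop_542} applied with $i=0$ and $i=1$ yields
\begin{equation*}
D'(s) = -N(s), \qquad N'(s) = -\int_{\RR^n} \Gamma_2(\vphi_s, \vphi_s) \, d\mu_s.
\end{equation*}
Differentiating the ratio gives $R' = (N'D + N^2)/D^2$, so $R'(s) \le 0$ is equivalent to the estimate
\begin{equation*}
N(s)^2 \le D(s) \cdot \int_{\RR^n} \Gamma_2(\vphi_s, \vphi_s) \, d\mu_s. \qquad (\star)
\end{equation*}

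The main work is to prove $(\star)$, which I would accomplish via a Cauchy--Schwarz--Bochner chain exploiting the log-concavity of $\mu_s$. Integration by parts gives $N(s) = -\int \vphi_s L_s \vphi_s \, d\mu_s$, so by Cauchy--Schwarz in $L^2(\mu_s)$, $N(s)^2 \le D(s) \cdot \int (L_s \vphi_s)^2 \, d\mu_s$. The classical Bochner identity for the self-adjoint operator $L_s$ with invariant measure $\mu_s$ reads
\begin{equation*}
\int_{\RR^n} (L_s \vphi_s)^2 \, d\mu_s = \int_{\RR^n} \bigl[ \|\nabla^2 \vphi_s\|_{HS}^2 - \nabla^2 (\log \rho_s) \nabla \vphi_s \cdot \nabla \vphi_s \bigr] \, d\mu_s.
\end{equation*}
Comparing this with \eqref{eq_447}, the right-hand side equals $\int \Gamma_2(\vphi_s, \vphi_s) \, d\mu_s + \int \nabla^2(\log \rho_s) \nabla \vphi_s \cdot \nabla \vphi_s \, d\mu_s$. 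The second summand is non-positive because $\mu_s$ is log-concave (preserved under Gaussian convolution, as noted in the introduction), so $\int (L_s \vphi_s)^2 \, d\mu_s \le \int \Gamma_2(\vphi_s, \vphi_s) \, d\mu_s$, and chaining yields $(\star)$.

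The log-convexity of $\|\vphi_s\|_{L^2(\mu_s)}$ then follows at once from the identity $(\log D)'(s) = D'/D = -R(s)$: since $R$ is non-increasing, $\log D$ is convex, hence so is $\log \|\vphi_s\|_{L^2(\mu_s)} = \tfrac{1}{2} \log D(s)$. The chief conceptual obstacle is noticing that Proposition \ref{prop_542} by itself only yields the weaker fact $\Gamma_2 \ge 0$ (equivalently $N' \le 0$), so one must introduce the sharper Cauchy--Schwarz--Bochner comparison above in order to obtain $(\star)$. The remaining technicalities---passing to the limit from smooth test functions to all $\vphi \in H^1(\mu)$, and continuity at $s = 0$---are standard, relying on the $L^2$-contractivity of $Q_s$ together with gradient estimates for $Q_s$ derived from the covariance formula \eqref{eq_404} (which, by log-concavity of the auxiliary measures $p_{s,y}$, yields a Brascamp--Lieb-type bound on $\|\nabla Q_s \vphi\|_{L^2(\mu_s)}$ in terms of $\|\nabla \vphi\|_{L^2(\mu)}$).
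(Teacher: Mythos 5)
Your proposal is correct and follows essentially the same route as the paper's proof: differentiate $R_\vphi$ via Proposition \ref{prop_542}, reduce the desired inequality to the Cauchy--Schwarz bound $N(s)^2 \le D(s)\int(L_s\vphi_s)^2\,d\mu_s$ (which is exactly the paper's \eqref{eq_552}), and compare $\int(L_s\vphi_s)^2\,d\mu_s$ with $\int\Gamma_2\,d\mu_s$ through the Bochner identity \eqref{eq_536} and the sign of $\nabla^2\log\rho_s$. The only difference is cosmetic: you isolate the target estimate $(\star)$ up front and chain the two ingredients, whereas the paper substitutes directly into $R'(s)$ and shows each resulting term is nonpositive.
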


For the proof of Theorem \ref{cor_612} we require the following technical lemma:

\begin{lemma}\mbox{}
\begin{enumerate}
\item[(i)] For any fixed $s \geq 0$, the quantities  $R_{\vphi}(s)$ and $\| \vphi_s \|_{L^2(\mu_s)}$ depend continuously on  $\vphi \in H^1(\mu) \setminus \{ 0 \}$.
\item[(ii)] For any $0 \not \equiv \vphi \in H^1(\mu)$,
\begin{equation}  R_{\vphi}(0) = \lim_{s \rightarrow 0^+} R_{\vphi}(s) \qquad \text{and} \qquad \| \vphi \|_{L^2(\mu)} = \lim_{s \rightarrow 0^+} \| \vphi_s \|_{L^2(\mu_s)}. \label{eq_1313} \end{equation}
\end{enumerate}
\label{lem_1321}
\end{lemma}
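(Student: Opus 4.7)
The crux of the proof is the following $H^1$-contraction property of $Q_s$: for $\vphi \in H^1(\mu)$ and $s > 0$,
\begin{equation*}
|\nabla Q_s \vphi(y)|^2 \leq Q_s(|\nabla \vphi|^2)(y) \quad \text{for a.e. } y \in \RR^n.
\end{equation*}
I would derive this from the covariance formula $\nabla Q_s \vphi(y) = \cov_{p_{s,y}}(x, \vphi)/s$ obtained in Lemma \ref{lem_606}, together with the Brascamp-Lieb inequality applied to $p_{s,y}$, whose potential $|x-y|^2/(2s) - \log \rho(x)$ has Hessian bounded below by $I/s$ thanks to the log-concavity of $\rho$. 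A Cauchy-Schwarz step in $L^2(p_{s,y})$ reduces the bound to the two Brascamp-Lieb variance estimates $\var_{p_{s,y}}(v \cdot x) \leq s |v|^2$ and $\var_{p_{s,y}}(\vphi) \leq s \, \EE_{p_{s,y}}|\nabla \vphi|^2$. Integrating the pointwise bound against $d\mu_s$ and using the identity $\int Q_s(|\nabla\vphi|^2) \, d\mu_s = \int |\nabla\vphi|^2 \, d\mu$, one obtains $\|\nabla Q_s \vphi\|_{L^2(\mu_s)} \leq \|\nabla \vphi\|_{L^2(\mu)}$. Combined with the $L^2$-contraction of $Q_s$ already recorded in the text, this shows that $Q_s$ is $1$-Lipschitz as a map $H^1(\mu) \to H^1(\mu_s)$.

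Part (i) is then immediate. Continuity at $s = 0$ is trivial. For fixed $s > 0$, linearity and the two contractions above show that $\vphi \mapsto \|Q_s \vphi\|_{L^2(\mu_s)}$ and $\vphi \mapsto \|\nabla Q_s \vphi\|_{L^2(\mu_s)}$ are each $1$-Lipschitz on $H^1(\mu)$, so the Rayleigh quotient $R_\vphi(s)$ is continuous on $H^1(\mu) \setminus \{0\}$.

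For part (ii) I would first treat the case $\vphi \in C_c^\infty(\RR^n)$. As $s \to 0^+$ the probability density $p_{s,y}$ concentrates at $y$ at rate $\sqrt{s}$; for $y$ in the interior of $\supp \rho$, where $\rho$ is continuous by log-concavity, a direct computation via the change of variables $u = (x-y)/\sqrt{s}$ and Taylor expansion of $\vphi$ yields the pointwise limits $Q_s \vphi(y) \to \vphi(y)$ and $\nabla Q_s \vphi(y) \to \nabla \vphi(y)$. Since $\rho_s(y) \to \rho(y)$ pointwise on the same set, the nonnegative integrands $|Q_s \vphi|^2 \rho_s$ and $|\nabla Q_s \vphi|^2 \rho_s$ converge pointwise a.e.\ to $\vphi^2 \rho$ and $|\nabla \vphi|^2 \rho$. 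Fatou's lemma supplies the $\liminf$ inequalities, while the corresponding $\limsup$ inequalities come from the contractions in the first paragraph, so both limits of (ii) hold for such $\vphi$. The extension to general $\vphi \in H^1(\mu)$ is then a standard $\eps/3$ argument based on density of $C_c^\infty(\RR^n)$ in $H^1(\mu)$ (cited just before the theorem) together with the Lipschitz bounds from part (i), which are uniform in $s \geq 0$.

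The main obstacle is justifying the Brascamp-Lieb step rigorously, since $\rho$ is only assumed log-concave and need not be $C^2$. I would handle this by approximation: replace $\rho$ by the smooth log-concave density $\rho * \gamma_\eps$, establish the inequality in that case, and then pass to the limit $\eps \to 0$ using the explicit integral formulas of Lemma \ref{lem_606} together with standard convergence theorems.
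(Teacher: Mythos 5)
Your proposal follows the paper's proof almost step for step: part (i) via the same pointwise $L^2$ and $H^1$ contraction bounds from Cauchy--Schwarz and Brascamp--Lieb, and part (ii) by reducing to $\vphi \in C_c^\infty(\RR^n)$ through density and the local uniform continuity coming from part (i), then establishing pointwise a.e.\ convergence of $\vphi_s^2 \rho_s$ and $|\nabla \vphi_s|^2 \rho_s$ and upgrading to convergence of the integrals. Your two small deviations are both sound. First, you obtain the pointwise limits by the substitution $u = (x-y)/\sqrt{s}$ and a Taylor expansion of $\vphi$, whereas the paper differentiates the quotient $\vphi_s = P_s(\vphi\rho)/\rho_s$ and invokes the fact that $\nabla P_s f(y) \to \nabla f(y)$ when $f$ is bounded and differentiable at $y$, with differentiability supplied a.e.\ by Rademacher's theorem; your route actually requires slightly less, since it needs only continuity of $\rho$ at $y$ (so that the renormalized profile $u \mapsto e^{-|u|^2/2}\rho(y+\sqrt{s}\,u)$ converges to a Gaussian, giving $\cov_{p_{s,y}}(x,x)/s \to \id$ and $\EE_{p_{s,y}}[x]\to y$). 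Second, you upgrade pointwise to integral convergence via Fatou plus the contraction bounds, rather than the paper's dominated convergence with a fixed Gaussian majorant; both work. Two small remarks: your argument for the pointwise limits only covers $y$ in the interior of $\supp\rho$, and the case $y \notin \overline{\supp\rho}$ needs to be handled separately, though it follows immediately from your pointwise bounds since $P_s(\vphi^2\rho)(y) \to 0$ and $P_s(|\nabla\vphi|^2\rho)(y) \to 0$ there. Also, your concern that Brascamp--Lieb requires $C^2$ regularity of $\rho$ is unfounded: the form of the inequality actually used (``the Poincar\'e constant of a density that is more log-concave than $\gamma_s$ is at most $s$'') holds for arbitrary log-concave densities with no smoothness hypothesis, so the approximation step you outline at the end is unnecessary.
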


\begin{proof} We first prove part (i). Let $\vphi \in L^2(\mu) \backslash \{0\}$. By Cauchy-Schwarz, we have the pointwise bound
\begin{equation}  \vphi_s^2 \rho_s = \frac{P_s^2(\rho \vphi)}{P_s(\rho)} \leq \frac{P_s( \vphi^2 \rho) P_s(\rho)}{P_s(\rho)}= P_s(\vphi^2 \rho).
\label{eq_237} \end{equation}
Moreover, the log-concavity of $\mu$ implies that whenever $\vphi \in H^1(\mu)$,
\begin{equation}
|\nabla \vphi_s|^2 \rho_s \leq P_s(|\nabla \vphi|^2 \rho).
\label{eq_254}
\end{equation}
Indeed, the probability density $p_{s,y}$ from (\ref{eq_350}) is ``more log-concave than $\gamma_s$,'' in the sense
that $p_{s,y} / \gamma_s$ is log-concave. The Brascamp-Lieb inequality (see e.g., \cite[\S 4.9]{BGL})
thus implies that the Poincar\'e constant of the probability density $p_{s,y}$ is at most $s$. That is,
letting $X$ be a random vector with density $p_{s,y}$ and $f$ a weakly differentiable function with $\EE |f(X)|^2 < \infty$ and $\EE |\nabla f(X)|^2 < \infty$,
\begin{equation}  \var f(X) \leq s \cdot \EE |\nabla f(X)|^2. \label{eq_1256} \end{equation}
Hence, by Lemma \ref{lem_606}, for any  $\theta \in S^{n-1} = \{ x \in \RR^n \, ; \, |x| = 1 \}$,
$$
\nabla \vphi_s \cdot \theta = \frac{\EE (X \cdot \theta) \vphi(X) - \EE (X \cdot \theta) \EE \vphi(X)}{s}
\leq \frac{\sqrt{ \var(X \cdot \theta) \var(\vphi(X)) } }{s} \leq \sqrt{ \EE |\nabla \vphi(X)|^2 },
$$
which implies (\ref{eq_254}) since $\EE |\nabla \vphi(X)|^2 = Q_s(|\nabla \vphi|^2) = P_s(|\nabla \vphi|^2 \rho) / \rho_s$.
By integrating over $\RR^n$, the inequalities (\ref{eq_237}) and (\ref{eq_254}) imply that
\begin{align*}
& \| \vphi_s \|_{L^2(\mu_s)}  \leq \| \vphi \|_{L^2(\mu)} \qquad \qquad & \text{for} \ \vphi \in L^2(\mu), \\
& \| \nabla \vphi_s \|_{L^2(\mu_s)}  = \sqrt{ \int_{\RR^n} |\nabla \vphi_s|^2 d \mu_s } \leq \| \vphi \|_{H^1(\mu)} \qquad \qquad & \text{for} \ \vphi \in H^1(\mu).
\end{align*}
Consequently, the functional $\vphi \mapsto \| \vphi_s \|_{L^2(\mu_s)}$ is $1$-Lipschitz in $L^2(\mu)$,
while the functional $\vphi \mapsto \| \nabla \vphi_s \|_{L^2(\mu_s)}$ is $1$-Lipschitz in $H^1(\mu)$.
In particular, for any fixed $s \geq 0$, the quantities  $R_{\vphi}(s)$ and $\| \vphi_s \|_{L^2(\mu_s)}$ depend continuously on $\vphi \in H^1(\mu) \setminus \{ 0 \}$, proving (i).

\medskip For part (ii), note that $\vphi \mapsto R_{\vphi}(s)$ is locally uniformly continuous in $H^1(\mu) \setminus \{ 0 \}$, being the quotient of two positive, $1$-Lipschitz functions.
Hence, it suffices to prove (\ref{eq_1313}) for $\vphi$ in a dense subset of $H^1(\mu) \setminus \{ 0 \}$. We may thus assume that $\vphi$ is smooth and compactly supported. We claim that
for almost every $y \in \RR^n$,
\begin{equation}  \vphi_s^2(y) \rho_s(y) \xrightarrow{s \rightarrow 0}  \vphi^2(y) \rho(y)
\qquad \text{and} \qquad  |\nabla \vphi_s(y)|^2 \rho_s(y) \xrightarrow{s \rightarrow \infty}  |\nabla \vphi(y)|^2 \rho(y).
\label{eq_305} \end{equation}
Let $K = \{ x \in \RR^n \, ; \, \rho(x) > 0 \}$. In proving (\ref{eq_305}), we may thus assume that $y \not \in \partial K$, since the boundary of
the convex set $K$ has  Lebesgue measure zero.
If $y \not \in \overline{K}$ then $\rho$ vanishes in a neighborhood of $y$, hence
$$ P_s(\vphi^2 \rho)(y) \xrightarrow{s \rightarrow 0} 0  \qquad \text{and} \qquad  P_s(|\nabla \vphi|^2 \rho)(y) \xrightarrow{s \rightarrow 0} 0, $$
and (\ref{eq_305}) follows from the bounds (\ref{eq_237}) and (\ref{eq_254}). As $\rho$ is log-concave, it is locally Lipschitz on $K$, so by the Rademacher theorem, $\rho$ is differentiable almost everywhere in the interior of $K$. It thus suffices to prove (\ref{eq_305}) for $y \in K$ such that $\rho$ is differentiable at $y$. Differentiating $\vphi_s$ yields
\begin{equation}\label{nabla_phis2}
\nabla \vphi_s = \nabla\left(\frac{P_s(\vphi\rho)}{P_s(\rho)}\right) = \frac{\nabla P_s(\vphi\rho)}{P_s(\rho)} - \frac{P_s(\vphi\rho) \nabla \rho_s}{\rho_s^2} \qquad \qquad \qquad (s > 0).
\end{equation}
It is a property of the heat semigroup that if $f$ is a bounded measurable function differentiable at a point $y \in \RR^n$, then $\nabla P_s(f)(y) \rightarrow \nabla f(y)$
as $s \rightarrow 0$; this is easily shown by writing $\nabla P_s f = f * \nabla \gamma_s$ and approximating $f$ by its first-order Taylor polynomial. Applying this to the functions $\rho$ and $\vphi \rho$ which are bounded in $\RR^n$ and differentiable at $y$, we obtain $\nabla \rho_s(y) \rightarrow \nabla\rho(y)$ and $\nabla P_s(\vphi \rho)(y) \rightarrow \nabla(\vphi \rho)(y)$ as $s \rightarrow 0$. Moreover,
$\vphi_s(y) \rightarrow \vphi(y), \rho_s(y) \rightarrow \rho(y)$ because $\rho$ and $\vphi\rho$ are continuous at $y$ and bounded in $\RR^n$.
It follows that $\nabla\vphi_s(y) \rightarrow \nabla \vphi(y)$, completing the proof  of (\ref{eq_305}).

\medskip Finally, the functions $|\nabla \vphi|^2 \rho$ and $\vphi^2 \rho$ are bounded and compactly supported.
Hence, for $s \in (0, 1]$ the Gaussian convolutions $P_s(|\nabla\vphi|^2 \rho)$ and $P_s(\vphi^2 \rho)$
are bounded by $C e^{-|y|^2 / 2}$ in $\RR^n$ for some constant $C$ that does not depend on $s$.
From (\ref{eq_237}), (\ref{eq_254}), the dominated convergence theorem, and (\ref{eq_305}), we obtain
$$ \int_{\RR^n} \vphi_s^2 \rho_s \xrightarrow{s \rightarrow 0}  \int_{\RR^n} \vphi^2 \rho
\qquad \text{and} \qquad  \int_{\RR^n} |\nabla \vphi_s|^2 \rho_s \xrightarrow{s \rightarrow 0}  |\nabla \vphi|^2 \rho, $$
completing the proof of (\ref{eq_1313}).
\end{proof}

\begin{remark} Inequality (\ref{eq_254}) states that $|\nabla \vphi_s|^2 \leq Q_s( |\nabla \vphi|^2 )$.
Using the interpretation in \S 3.1 and arguing as in \cite[\S 3.2]{Stroock}, one may prove the stronger gradient bound $$ |\nabla \vphi_s| \leq Q_s(|\nabla \vphi|), $$ which we do not need here.
\end{remark}

It follows from (\ref{eq_212}) and a straightforward computation that
\begin{equation}  \Gamma_2(u,u) = \Box_s |\nabla u|^2 - 2 \langle \nabla \Box_s u, \nabla u \rangle
= \| \nabla^2 u \|_{HS}^2 - 2 \langle \nabla^2 (\log \rho_s) \nabla u, \nabla u \rangle.
\label{eq_535} \end{equation}
On the other hand, the Bochner formula for the differential operator $L_s = L_{\mu_s}$ states that for any smooth, compactly supported
function $u: \RR^n \rightarrow\RR$,
\begin{equation}
\int_{\RR^n} (-L_s)^2 u \cdot u d \mu_s = \int_{\RR^n} (L_s u)^2 d \mu_s = \int_{\RR^n} \left[ \| \nabla^2 u \|_{HS}^2 - \langle \nabla^2 (\log \rho_s) \nabla u, \nabla u \rangle
\right] d \mu_s. \label{eq_536}
\end{equation}
See \cite[\S 1.16.1]{BGL} for a proof of (\ref{eq_536}). Formula (\ref{eq_536}) remains
valid when $u$ and its partial derivatives are smooth functions with subexponential decay relative to $\rho_s$,
since the integration by parts yield no boundary terms as in the proof of Proposition \ref{prop_542}. Thanks
to Lemma \ref{rs_bds}, we know that formula (\ref{eq_536}) is valid for $u = Q_s \vphi$ whenever $\vphi$ has subexponential decay relative to $\rho$.

\medskip
The integrand on the right-hand side of (\ref{eq_536}) is almost identical to the expression in (\ref{eq_535}),
the only difference is the coefficient $2$ in front of the second summand.

\begin{proof}[Proof of Theorem \ref{cor_612}]
When $u$ is a smooth function such that $u$ and its partial derivatives have subexponential decay relative to $\rho_s$, we write for $i=1,2$,
$$ \| u \|_{\dot{H}^i(\mu_s)} = \sqrt{ \int_{\RR^n} (-L_s)^i u \cdot u d \mu }. $$
Thus  $\| u \|^2_{\dot{H}^1(\mu_s)} = \int_{\RR^n} |\nabla u|^2 d\mu_s$.
The operator $L_s$ is initially defined by the formula $L_s u = \Delta u + \nabla \log \rho_s \cdot \nabla u$
assuming $u$ and its partial derivatives have subexponential decay relative to $\rho_s$.
This operator is essentially self-adjoint and negative semi-definite
in $L^2(\mu_s)$ (e.g., \cite[Corollary 3.2.2]{BGL}). Hence, by the spectral theorem and the Cauchy-Schwarz inequality,
\begin{equation}
\| u \|_{\dot{H}^1(\mu_s)}^2 \leq \| u \|_{\dot{H}^2(\mu_s)} \cdot \| u \|_{L^2(\mu_s)}. \label{eq_552}
\end{equation}
Consider first the case where  $0 \not \equiv \vphi \in H^1(\mu)$ has subexponential decay relative to $\rho$
and $s > 0$.
Thanks to Proposition \ref{prop_542} we may apply
(\ref{eq_216}) and compute  that
$$ \frac{d}{ds} R_{\vphi}(s) = \frac{\| \vphi_s \|_{\dot{H}^1(\mu_s)}^4 - \int_{\RR^n} \Gamma_2(\vphi_s, \vphi_s) d \mu_s \cdot \| \vphi_s \|_{L^2(\mu_s)}^2  }{ \| \vphi_s \|_{L^2(\mu_s)}^4 }. $$
Hence, from (\ref{eq_535}) and (\ref{eq_536}),
$$ \label{rayleigh_diff}
\frac{d}{ds} R_{\vphi}(s)  = \frac{\|\vphi_s\|^4_{\dot{H}^1(\mu_s)} - \|\vphi_s\|^2_{\dot{H}^2(\mu_s)} \|\vphi_s\|^2_{L^2(\mu_s)}}{\|\vphi_s\|^4_{L^2(\mu_s)}} + \frac{\int_{\RR^n} \langle (\nabla^2 \log \rho_s)\nabla \vphi_s, \nabla \vphi_s\rangle d\mu_s}{\|\vphi_s\|^2_{L^2(\mu_s)}}.
$$
By log-concavity $\nabla^2 \log \rho_s \leq 0$. Hence
we conclude from (\ref{eq_552}) that
$$ \frac{d}{ds} R_{\vphi}(s) \leq 0.  $$
Therefore $R_{\vphi}(s)$ is non-increasing in $s \in (0, \infty)$. It follows from (\ref{eq_216}) that $$ \partial_s \log \| \vphi \|_{L^2(\mu_s)}
= -R_{\vphi}(s), $$ and consequently $\log \| \vphi \|_{L^2(\mu_s)}$ is convex in $s \in (0, \infty)$.
Lemma \ref{lem_1321} now implies that $R_{\vphi}(s)$ is decreasing in $s \in [0, \infty)$
and $\log \| \vphi_s \|_{L^2(\mu_s)}$ is convex in $s \in [0, \infty)$.

\medskip
Finally, compactly supported smooth functions, which certainly have subexponential decay relative to $\rho$, are dense in $H^1(\mu)$.
The Rayleigh quotient and $\| \vphi_s \|_{L^2(\mu_s)}$ are continuous on $H^1(\mu) \backslash \{0\}$ by Lemma \ref{lem_1321}, hence
we obtain that $R_\vphi(s)$ is non-increasing and
$\log \| \vphi_s \|_{L^2(\mu_s)}$ is convex in $s \in [0, \infty)$ for any $0 \not \equiv \vphi \in H^1(\mu)$.
\end{proof}

Using the min-max characterization of eigenvalues, we derive our main result as a corollary to Theorem \ref{cor_612}.

\begin{proof}[Proof of Theorem \ref{thm_1115}] We may set $s = 1$, since $\mu_s = \mu * \gamma$ for $s =1$.
We may assume that $\mu$ is absolutely continuous, as otherwise we may pass
to a lower dimension thanks to the well-known fact that the Poincar\'e constant of a Cartesian
product of two measures is the maximum of the Poincar\'e constants of the factors.
The Poincar\'e constant of $\mu$, which is finite and positive (see \cite{bobkov}), satisfies
$$
\frac{1}{C_P(\mu)} = \inf \left\{R_\vphi(0) \, ;\, 0 \not \equiv \vphi \in H^1(\mu), \int_{\RR^n} \vphi\,d\mu = 0\right\},
$$
and similarly for $\mu_s$. For any $\eps > 0$ there exists $0 \not \equiv \vphi \in H^1(\mu)$ with $\int \vphi d \mu = 0$
such that $R_\vphi(0) < C_P(\mu)^{-1} + \eps$.
Since $\int \vphi_s d\mu_s = \int \vphi d \mu = 0$, we deduce from Theorem \ref{cor_612} that,
$$ \frac{1}{C_P(\mu_s)} \leq R_{\vphi}(s) \leq R_{\vphi}(0) < \frac{1}{C_P(\mu)} + \eps. $$
As $\eps > 0$ was arbitrary, inequality (\ref{eq_1125}) is proven.

\medskip
Next, assume that $L_{\mu}$
has discrete spectrum, and let $k \geq 1$. There exists a $(k+1)$-dimensional subspace $E \subseteq H^1(\mu)$
such that $R_{\vphi}(0) \leq \lambda_k^{(\mu)}$ for any $0 \not \equiv \vphi \in E$. For $s > 0$ the linear operator $Q_s$
defined in (\ref{eq_945}) is one-to-one in $L^1(\mu)$. (Indeed, given $P_s(\vphi \rho)$ we may recover the Fourier transform of $\vphi \rho \in L^1(\RR^n)$ which determines $\vphi \in L^1(\mu)$.) Hence
$$ E_s = \{ Q_s \vphi \, ; \, \vphi \in E \} $$
is a $(k+1)$-dimensional subspace, and $R_\vphi(s) \leq R_{\vphi}(0) \le \lambda_k^{(\mu)}$ for all $\vphi \in E$. In other words, there exists a $(k+1)$-dimensional
subspace $E_s \subseteq H^1(\mu_s)$ on which the Rayleigh quotient is at most $\lambda_k^{(\mu)}$. By the min-max characterization of eigenvalues,
$$ \lambda_k^{(\mu_s)} \leq \lambda_k^{(\mu)}, $$
completing the proof.
\end{proof}

The proof of Theorem \ref{thm_1115} clearly shows that $C_P(\mu * \gamma_s) \geq C_P(\mu)$
for all $s > 0$, so by the semigroup property $s \mapsto C_P(\mu * \gamma_s)$ is non-decreasing in $s \in [0, \infty)$.

\begin{remark}
Let $\mu$ be a log-concave probability measure in $\RR^n$ with density $\rho = e^{-W}$, where $W$ is a smooth function
such that $$ \lim_{x \rightarrow \infty} \frac{|\nabla W(x)|^2}{2} - \Delta W(x) = \infty. $$
In this case, we have the strict inequality
\begin{equation}  \lambda_k^{(\mu * \gamma)} < \lambda_k^{(\mu)} \qquad \qquad (k=1,2,\ldots) \label{eq_433} \end{equation}
In order to  prove (\ref{eq_433}), we first observe
that $\nabla^2 \log \rho_s(y) < 0$
for all $y \in \RR^n$ as follows from
the equality case of the Brascamp-Lieb inequality
or from \cite{CF, DPF}. Arguing as in the
proof of Theorem \ref{cor_612} and using the fact that $\nabla \vphi_s \not \equiv 0$ as $\vphi_s$
is non-constant, we conclude that $d R_{\vphi}(s) / ds < 0$ whenever
$0 \not \equiv \vphi \in H^1(\mu)$ has subexponential decay relative to $\rho$.

\medskip Therefore (\ref{eq_433}) would follow from Theorem \ref{cor_612}, as in the proof of Theorem \ref{thm_1115} above, had
we known that any eigenfunction $\vphi$ of $L_{\mu}$ has subexponential decay relative to $\rho$.

\medskip Indeed, let $A: L^2(\RR^n) \overset{\sim}\rightarrow L^2(\mu)$ be the isometry given by $A(g) = e^{\frac{W}{2}} g$. It is well-known and easy to verify that $A^{-1} L_\mu A$ is the Schr\"odinger operator
$$-\Delta + \frac{|\nabla W|^2}{4} - \frac{\Delta W}{2},$$
which is of the form $-\Delta + V$ with $V \ge 0$ and $V \rightarrow \infty$ as $x \rightarrow \infty$. By results on the decay of eigenfunctions of Schr\"odinger operators \cite[Theorem XIII.70]{RS}, the function  $A^{-1} \vphi$ has subexponential decay at infinity, and hence $\vphi$ has subexponential decay relative to $\rho$.
\end{remark}

\section{A contraction transporting $\mu * \gamma$ to $\mu$}
\label{sec_KM}

In this section we prove Theorem \ref{thm_154} using the arguments of Kim and Milman \cite{KM}.
To begin with, we work with a log-concave probability measure $\mu$ with a smooth, strictly positive density $\rho$ on $\mathbb R^n$.
We furthermore make the regularity assumption
that there exists $\eps > 0$ such that
\begin{equation} -\nabla^2 \log \rho(y) \leq \frac{1}{\eps} \cdot \id \qquad \qquad (y \in \RR^n). \label{eq_520}
\end{equation}
We shall later remove these assumptions on $\rho$. As above, for $s \geq 0$ we write $\mu_s = \mu * \gamma_s$ and $\rho_s$ is the density of $\mu_s$. Thus $\rho_s$ is smooth, positive
and log-concave in $\RR^n$. For $s \geq 0$ consider the {\it advection field}
\begin{equation} W_s(y) = -\frac{1}{2} \nabla \log \rho_s(y). \label{eq_443} \end{equation}
The ``physical'' interpretation of this vector field is as follows. One of the derivations of the heat equation is based on Fourier's law, according to
which the flux of heat across a tiny surface in a short time interval is proportional to the temperature gradient
across the surface. If we think of the heat as carried by a fluid of particles with density $\rho(x, t)$, this means that the current of heat is proportional to $-\nabla \rho$ (we take $\frac{1}{2}$ to be the constant of proportionality); since the current of heat is simply $\rho v$, where $v(x, t)$ is the bulk velocity of the fluid, we obtain $v = -\frac{1}{2} \frac{\nabla \rho}{\rho}$, which is \eqref{eq_443}. For more details, see \cite[\S 5.4]{villani}.

With this point of view, the trajectory of a particle located at time $s = 0$
at the point $y \in \RR^n$ is the curve $s \mapsto T_s(y)$ where
\begin{equation} \left \{  \begin{array}{rcll} \frac{d}{ds} T_s(y)  & = & W_s(T_s(y)), & s \geq 0 \\ T_0(y) & = & y & \end{array} \right.
\label{eq_452} \end{equation}

\begin{lemma} Under the above assumptions on $\rho$, the ordinary differential equation (\ref{eq_452}) determines
the family of maps $(T_s: \RR^n \rightarrow \RR^n)_{s \geq 0}$. These maps are
all diffeomorphisms, and $T_s(y)$ is smooth in $(s,y) \in [0, \infty) \times \RR^n$.
\end{lemma}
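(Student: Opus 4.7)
The plan is to cast (\ref{eq_452}) as a classical initial value problem for a smooth vector field that is Lipschitz in the space variable, uniformly in $s$, and then invoke standard ODE theory. The first step is joint smoothness of $W_s(y)$: since $\rho$ is smooth, strictly positive, and, by log-concavity (see (\ref{eq_208})), has exponentially decaying tails, all partial derivatives pass under the integral $\rho_s(y) = \int \rho(x) \gamma_s(y-x)\, dx$, so $\rho_s$ is smooth and strictly positive on $[0,\infty) \times \RR^n$, with value $\rho$ at $s=0$. Consequently $W_s(y) = -\tfrac{1}{2}\nabla \log \rho_s(y)$ is smooth in $(s,y)$.

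The core technical step is a uniform upper bound on the spatial Hessian of $\log \rho_s$. Differentiating $\log \rho_s(y) = \log \gamma_s(y) + \log Z_{s,y}$ using (\ref{eq_350_}) gives the Bayesian identity
\begin{equation*}
-\nabla^2 \log \rho_s(y) = \frac{1}{s}\id - \frac{\Sigma_{s,y}}{s^2},
\end{equation*}
where $\Sigma_{s,y}$ is the covariance matrix of the density $p_{s,y}$. Writing $p_{s,y} \propto e^{-V_{s,y}}$ yields $\nabla^2 V_{s,y} = \frac{1}{s}\id - \nabla^2 \log \rho$, so log-concavity of $\rho$ forces $\nabla^2 V_{s,y} \geq \frac{1}{s}\id$ while (\ref{eq_520}) forces $\nabla^2 V_{s,y} \leq \bigl(\frac{1}{s}+\frac{1}{\eps}\bigr)\id$. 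The Brascamp--Lieb variance inequality applied to linear test functions then gives $\Sigma_{s,y} \leq s\id$, recovering log-concavity of $\rho_s$; a matching reverse inequality, obtained from Cauchy--Schwarz together with the integration-by-parts identities $\EE[(X-\EE X)\otimes \nabla V_{s,y}] = \id$ and $\EE[\nabla V_{s,y}\otimes\nabla V_{s,y}] = \EE[\nabla^2 V_{s,y}]$, yields $\Sigma_{s,y} \geq \frac{s\eps}{s+\eps}\id$. Substituting gives
\begin{equation*}
0 \leq -\nabla^2 \log \rho_s(y) \leq \frac{1}{s+\eps}\id \leq \frac{1}{\eps}\id,
\end{equation*}
uniformly in $(s,y) \in [0,\infty)\times \RR^n$.

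With this bound, $W_s$ is $\frac{1}{2\eps}$-Lipschitz in $y$ uniformly in $s \geq 0$, so Picard--Lindel\"of produces a unique solution $T_s(y)$ of (\ref{eq_452}) defined for all $s \geq 0$ and $y \in \RR^n$, and standard smooth-dependence results make $(s,y) \mapsto T_s(y)$ a $C^\infty$ map. For the diffeomorphism property I would note that $DT_s(y)$ satisfies the linear matrix ODE $\frac{d}{ds} DT_s = (\nabla W_s)(T_s)\cdot DT_s$ with initial data $\id$, hence is a fundamental matrix and is invertible for every $s$, giving the local diffeomorphism property; and that running the backward ODE $\dot U_t = -W_{s-t}(U_t)$ on $[0,s]$, whose right-hand side enjoys the same uniform Lipschitz bound, produces a smooth global inverse of $T_s$. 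The hardest step is the uniform Hessian bound above: without the regularity hypothesis (\ref{eq_520}) one has only local Lipschitz control on $W_s$, leaving open both finite-time blow-up of trajectories and loss of global injectivity.
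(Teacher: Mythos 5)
Your proposal follows essentially the same route as the paper's proof: both establish joint smoothness of $W_s$, derive the covariance identity $-\nabla^2 \log \rho_s(y) = s^{-1}\id - s^{-2}\cov(p_{s,y})$, and bound $\cov(p_{s,y})$ from below using the regularity hypothesis (\ref{eq_520}) to conclude that $W_s$ is $1/(2\eps)$-Lipschitz uniformly in $s \geq 0$, after which standard ODE theory applies. The only differences are cosmetic: you re-derive the covariance lower bound via the Cram\'er--Rao argument (Cauchy--Schwarz plus the two integration-by-parts identities), whereas the paper cites \cite[Theorem 5.4]{BLL}, and you spell out the diffeomorphism property through the matrix Jacobi ODE and the time-reversed flow rather than citing a reference on flows of time-dependent vector fields.
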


\begin{proof} Since $\rho_s = \rho * \gamma_s$, the function $\rho_s(y)$ is smooth and positive in $(s,y) \in [0, \infty) \times \RR^n$.
Therefore $W_s(y)$ is smooth in $(s,y) \in [0, \infty) \times \RR^n$ as well.
It remains to show that $W_s$ is $1/(2\eps)$-Lipschitz on $\RR^n$ for any $s \geq 0$. Once this is shown,
 the standard theory of ordinary differential equations implies the existence and uniqueness
of solutions to \eqref{eq_452} and their smooth dependence on initial conditions (e.g., \cite[Chapter V]{Hartman}). The fact that the $T_s$ are diffeomorphisms follows from the theory of flows of time-dependent vector fields on manifolds (e.g., \cite[Chapter 17]{Lee}).

 \medskip We need to compute the derivative of $W_s$. As in the beginning of the proof of Lemma \ref{rs_bds} above,
 by differentiating (\ref{eq_350_}) we see that for any $s > 0$ and $y \in \RR^n$,
\begin{equation}  D W_s(y) = -\frac{1}{2} \nabla^2 \log \rho_s(y) = \frac{s \cdot \id - \cov(p_{s,y})}{2 s^2} \label{eq_717} \end{equation}
where $\cov(p_{s,y}) \in \RR^{n \times n}$ is the covariance matrix of the probability density $p_{s,y}$.
Since $\rho_s$ is log-concave, the differential $D W_s$ is a symmetric positive semidefinite matrix.
From (\ref{eq_350}) and the regularity assumption (\ref{eq_520}) we see  that for $s \geq 0$ and $x \in \RR^n$,
\begin{equation}
 -\nabla^2 \log p_{s,y}(x) \leq \left( \frac{1}{\eps} + \frac{1}{s} \right) \cdot \id
\label{eq_607}
\end{equation}
in the sense of symmetric matrices.  It is well-known (see \cite[Theorem 5.4]{BLL}) that (\ref{eq_607}) implies that
\begin{equation}
\left( \frac{1}{\eps} + \frac{1}{s} \right)^{-1}  \cdot \id \leq \cov( p_{s,y} ).
\label{eq_608}
\end{equation}
 From (\ref{eq_717}) and (\ref{eq_608}) we deduce the pointwise bound
$$ \left \| D W_s(y) \right \|_{op} \leq \frac{1}{2(s+\eps)} \qquad \qquad \qquad (y \in \RR^n), $$
where $\| \cdot \|_{op}$ is the operator norm.
This bound clearly applies also for $s = 0$.
Therefore $W_s: \RR^n \rightarrow \RR^n$ is $1/(2\eps)$-Lipschitz for any $s \geq 0$, completing the proof.
\end{proof}

As explained in Kim and Milman \cite{KM}, the diffeomorphism $T_s$ is an expansion, i.e., $|T_s(x) - T_s(y)| \geq |x-y|$
for all $x,y$ and $s$. In order to prove this, we show that everywhere in $\RR^n$,
\begin{equation}  (D T_s)^* (D T_s) \geq \id.
\label{eq_306} \end{equation}
Inequality (\ref{eq_306}) is certainly true when $s = 0$, while the fact that $D W_s$ is positive semidefinite implies that
$$ \frac{\partial}{\partial s} (D T_s)^* (D T_s)
= 2 (D T_s)^* (D W_s) D T_s \geq 0. $$
Therefore (\ref{eq_306}) holds true. This implies that $\left \| D \left(T_s^{-1} \right) \right \|_{op} \leq 1$,
hence $T_s^{-1}$ is a contraction and $T_s$ is an expansion. Next, from (\ref{eq_443}) and the heat
equation $\partial \rho_s / \partial s = \Delta \rho_s / 2$ we obtain the linear transport equation
(also known as the continuity equation),
$$ \frac{\partial \rho_s}{\partial s} + {\rm div}( \rho_s W_s ) = 0 \qquad \qquad \qquad (s \geq 0, y \in \RR^n). $$
The continuity equation implies that $\rho_s$ is the density of the pushforward of $\mu$ under
the diffeomorphism $T_s$ (see e.g. \cite[Theorem 5.34]{villani}).
Consequently, the map $T_s^{-1}$ is a contraction that pushes forward $\mu_s$ to $\mu$.

\begin{proof}[Proof of Theorem \ref{thm_154}] Set $s = 1$ so that $\mu_s = \mu * \gamma$.
We have just established the existence of a contraction transporting $\mu_s$ to $\mu$
under the additional requirement that $\mu$ admits a smooth, positive density satisfying
the regularity assumption (\ref{eq_520}).

\medskip Consider now the case where $\mu$ is an arbitrary absolutely-continuous, log-concave probability measure
in $\RR^n$. For any $\eps > 0$, the measure $\mu_{\eps} = \mu * \gamma_{\eps}$ has a smooth, positive, log-concave density
satisfying the regularity assumption (\ref{eq_520}), as follows from the computation in (\ref{eq_717}) above.
Hence there exists a contraction transporting $\mu_{\eps} * \gamma$ to $\mu_{\eps}$.
By \cite[Lemma 3.3]{KM}, in order to show that there exists a contraction from $\mu * \gamma$ to $\mu$, it suffices to show that
\begin{equation}  \mu_{\eps} \xrightarrow{\eps \rightarrow 0^+} \mu \label{eq_336} \end{equation}
in the total variation metric, and that $\mu_{\eps} * \gamma \longrightarrow \mu * \gamma$ as $\eps \rightarrow 0$ in the weak topology.
Since $\mu_{\eps} * \gamma = (\mu * \gamma)_{\eps}$ and since convergence in total variation
implies convergence in the weak topology, it suffices to prove (\ref{eq_336}). Thus we need to show that
\begin{equation} \int_{\RR^n} |\rho_{\eps}(x) - \rho(x)| dx \xrightarrow{\eps \rightarrow 0^+} 0. \label{eq_406_} \end{equation}
Arguing as in (\ref{eq_208}) and the paragraph following (\ref{eq_404_}) above, we know that there exist $a,b > 0$ such that
 $\rho_{\eps}(x) \leq a e^{-b|x|}$ for all $x \in \RR^n$ and $0 \leq \eps \leq 1$, with $\rho_0 = \rho$.
Since $\rho$ is continuous almost everywhere in $\RR^n$, the integrand in (\ref{eq_406_})
converges to zero almost everywhere, and (\ref{eq_406_}) follows from the dominated convergence theorem.

\medskip Thus the conclusion of the theorem is valid when $\mu$ is an absolutely continuous, log-concave probability measure.
Finally, if $\mu$ is not absolutely continuous, then we may project to a lower
dimension using an orthogonal projection, which is a contraction, and reduce matters
to the absolutely continuous case.
\end{proof}

Theorem \ref{thm_154} implies that for $\nu = \mu * \gamma$ and  $0 \not \equiv \vphi \in H^1(\nu)$
we have the following inequality between Rayleigh quotients:
\begin{equation}
\frac{\int_{\RR^n} |\nabla (\vphi \circ T)|^2 d \nu}{\int_{\RR^n} (\vphi \circ T)^2 d \nu} \leq
\frac{\int_{\RR^n} |(\nabla \vphi) \circ T|^2 d \nu}{\int_{\RR^n} (\vphi \circ T)^2 d \nu} =
\frac{\int_{\RR^n} |\nabla \vphi |^2 d \mu}{\int_{\RR^n} \vphi^2 d \mu}. \label{eq_414} \end{equation}
We may now repeat the proof of Theorem \ref{thm_1115} from \S \ref{sec2}, with the linear
map $\vphi \mapsto \vphi \circ T$ playing the role of the linear map $\vphi \mapsto Q_1 \vphi$.
This yields another proof of Theorem \ref{thm_1115}, relying on (\ref{eq_414})
in place of Theorem \ref{cor_612}.

\section{A Bayesian interpretation of Eldan's stochastic localization}
\label{sec3}

Eldan's stochastic localization technique was introduced by Eldan in \cite{eldan} and developed since then by several authors in different settings
\cite{C, EG, K, LV}. The method has turned out to be useful in particular for the study of log-concave measures, culminating thus far in the breakthrough result of Chen \cite{C} showing that the isotropic constant grows more slowly than any power of the dimension. In this section, we give a ``Bayesian'' interpretation of Eldan's stochastic localization relating it to the heat flow and to the operator $Q_s$ introduced above, as well as to the F\"ollmer drift in the theory of Wiener space.
It was this line of development which led us to the results announced in the introduction; however, this section 
may be read independently.

\medskip
We refer to \cite{Oks} for background on stochastic processes. Let $\mu$ be an absolutely continuous probability measure on $\RR^n$ with density $p_0$ and with finite second moments. Let $(W_t)_{t \ge 0}$ be a standard Brownian motion on $\RR^n$ with $W_0 = 0$.

\medskip The stochastic localization process, in the version introduced by \cite{LV}, is a density-valued stochastic process $p_t$ driven by $W_t$, defined as follows: for every $x \in \RR^n$, the process $(p_t(x))_{t \geq 0}$ is the solution to the stochastic differential equation
\begin{equation}
dp_t(x) = p_t(x) \langle x - a_t, dW_t\rangle \label{eq_0051}
\end{equation}
with initial condition $p_0$, where $a_t = \int_{\RR^n} x \cdot p_t(x)\,dx$ is the barycenter of $p_t$. As this equation has no drift term, $p_t(x)$ is a martingale, and $p_t$ is almost surely a probability density. In particular, $\EE[p_t(x)] = p_0(x)$, and for any test function $\vphi$, we have $\EE_{X \sim p_0}[\vphi(X)] = \EE[\EE_{X \sim p_t}[\vphi(X)]]$.

\medskip
The process $(p_t)_{t \geq 0}$ has another description, as a stochastic ``tilt'' of $p_0$. In this section, for $t \ge 0$ and  $\theta \in \RR^n$ let $p_{t, \theta}$ denote the probability density given by
\begin{equation}\label{eq_0334}
p_{t, \theta}(x) = \frac{1}{Z(t, \theta)} e^{\langle \theta, x\rangle - \frac{t|x|^2}{2}} p_0(x),
\end{equation}
where $Z(t, \theta) = \int_{\RR^n} e^{\langle \theta, x\rangle - \frac{t|x|^2}{2}} p_0(x)\,dx$ is a normalization constant.
Let $a(t, \theta)$ denote the barycenter of $p_{t, \theta}$, and define the stochastic process $\theta_t$ via the differential equation
\begin{equation}\label{eq_0141}
d\theta_t = a(t, \theta_t)\,dt + dW_t, \qquad \qquad \qquad \theta_0 = 0.
\end{equation}
It turns out that when $\theta_t$ and $p_t$ are driven by the same Brownian motion, $p_t$ is precisely equal to $p_{t, \theta_t}$. For proofs of these and other formulas relating to the stochastic localization process, and for the application to the KLS conjecture, see \cite{LV, LV2} or \cite{C}.

\medskip The Bayesian interpretation of the Eldan process is quite simple: let $X$ be a random vector distributed according to $\mu$,
independent of the Brownian motion $(W_t)_{t \geq 0}$. Denote
\begin{equation}  \tilde{\theta}_t = t X + W_t \qquad \qquad (t \geq 0). \label{eq_1127} \end{equation}
Our main observations are the following two claims:
\begin{enumerate}
\item[(i)] The process $(\tilde{\theta}_t)_{t \geq 0}$ coincides in law with the process $(\theta_t)_{t \geq 0}$ which solves (\ref{eq_0141}) above.
\item[(ii)] For any fixed $t > 0$ and $\theta \in \RR^n$, the probability density $p_{t, \theta}$ on $\RR^n$ is precisely the conditional probability distribution of $X$ given that $\tilde{\theta}_t = \theta$.
\end{enumerate}

Thus, when we observe the tilt process $(\theta_t)_{t \geq 0}$, we actually see a Brownian motion with a constant drift $X$ which is unknown, but whose prior distribution is known to us. Moreover, the posterior probability density for the unknown drift $X$ given the observation of the process $(\theta_s)_{0 \leq s \leq t}$ until time $t$ depends only on the state of the process at time $t$, and is equal to $p_{t, \theta_t}$.

\medskip In the following proposition we prove these two claims. For $T > 0$ let $\cV_T = C_0([0, T], \RR^n)$ be the Wiener space of $\RR^n$-valued continuous functions $(W_t)_{0 \leq t \leq T}$
with $W_0 = 0$. Slightly abusing notation, we write $\gamma_T$ for the Wiener probability measure on $\cV_T$ and $\{\mathcal F_t\}_{0 \leq t \leq T}$ for the natural filtration,
i.e., $\cF_t$ is the $\sigma$-algebra generated by  $(W_s)_{0 \leq s \leq t}$.

\begin{proposition} Let $\mu$ be a probability measure on $\RR^n$ which is absolutely continuous with respect to the Lebesgue measure $\lambda$.
Fix $T > 0$, and consider the space $\Omega = \RR^n \times \cV_T$ and the transformation $\tau: \Omega \rightarrow \Omega$ given by
$$ \tau(x, (W_t)_{0 \leq t \leq T}) = (x, (W_t + tx)_{0 \leq t \leq T}). $$
Write $\nu = \tau_*(\mu \otimes \gamma_T)$. Then,
\begin{enumerate}
 \item[(i)] The stochastic process $(\tilde{\theta}_t)_{t \geq 0}$ described in (\ref{eq_1127}) coincides in law with the It\^o process $(\theta_t)_{t \geq 0}$ defined as the  solution to the stochastic differential equation (\ref{eq_0141}).
\item[(ii)] The measure $\nu$ is absolutely continuous with respect to $\lambda \otimes \gamma_T$ on $\Omega$ with density
$$ \frac{ d \nu }{d (\lambda \otimes \gamma_T) } ( x, \tilde{\theta}  )
 = p_0(x) e^{\langle \tilde{\theta}_T, x \rangle - \frac{T |x|^2}{2}}, $$
 for $x \in \RR^n$ and $\tilde{\theta} = (\tilde{\theta}_t)_{0 \leq t \leq T} \in \cV_T$.
Consequently, when $(X, (\tilde{\theta}_t)_{0 \leq t \leq T})$ is the stochastic process described in (\ref{eq_1127}),
 the conditional distribution of $X$ with respect to $\tilde{\theta} = (\tilde{\theta}_t)_{0 \leq t \leq T}$ is given by the
 probability density
 \begin{equation}  q_T(x | \tilde{\theta} ) = \frac{p_0(x) e^{\langle \tilde{\theta}_T, x\rangle - \frac{T |x|^2}{2}}}{\int_{\RR^n} p_0(y) e^{\langle \tilde{\theta}_T, y\rangle - \frac{T |y|^2}{2}} dy} = p_{T, \tilde{\theta}_T}(x) \qquad \qquad (x \in \RR^n). \label{eq_142} \end{equation}
\end{enumerate}
\end{proposition}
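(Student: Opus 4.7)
The plan is to prove (ii) first by a direct Cameron--Martin computation, and then to deduce (i) from (ii) by recognizing $a(t, \tilde\theta_t)$ as a conditional expectation and invoking the innovation process from nonlinear filtering.

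For (ii), fix $x \in \RR^n$ and view the slice $\tau(x, \cdot) : \cV_T \to \cV_T$ as the deterministic translation of the Wiener path by the Cameron--Martin element $h(s) = s x$, whose Cameron--Martin derivative is the constant $\dot h(s) = x$. The Cameron--Martin formula then yields, for every bounded measurable $\psi$ on $\cV_T$,
\begin{equation*}
\int_{\cV_T} \psi\bigl(W + (s x)_{0 \le s \le T}\bigr)\, d\gamma_T(W) \;=\; \int_{\cV_T} \psi(\tilde\theta)\, e^{\langle x, \tilde\theta_T\rangle - \frac{T |x|^2}{2}}\, d\gamma_T(\tilde\theta).
\end{equation*}
Multiplying this identity by $p_0(x)$, integrating in $x$ against Lebesgue measure, and applying Fubini to an arbitrary bounded measurable $\phi$ on $\Omega$ gives the asserted density of $\nu$ with respect to $\lambda \otimes \gamma_T$. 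The statement about the conditional law of $X$ given $\tilde\theta$ is then just Bayes' rule: since the joint density depends on the path $\tilde\theta$ only through its endpoint $\tilde\theta_T$, the conditional density of $X$ does too, and dividing through by the $x$-marginal --- which by definition equals $Z(T, \tilde\theta_T)$ --- yields exactly $p_{T, \tilde\theta_T}$.

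For (i), applying the same Cameron--Martin calculation on the subinterval $[0, t]$ shows that the conditional law of $X$ given the $\sigma$-algebra generated by $(\tilde\theta_s)_{0 \le s \le t}$ is $p_{t, \tilde\theta_t}$, and in particular the corresponding conditional expectation of $X$ equals $a(t, \tilde\theta_t)$. Rewriting (\ref{eq_1127}) as $\tilde\theta_t = \int_0^t X\, ds + W_t$ exhibits $\tilde\theta$ as the observation process in nonlinear filtering with constant unknown signal $X$, so the innovation theorem yields that
\begin{equation*}
B_t \;:=\; \tilde\theta_t - \int_0^t a(s, \tilde\theta_s)\, ds
\end{equation*}
is a Brownian motion adapted to the filtration generated by $\tilde\theta$. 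Thus $\tilde\theta$ is a weak solution of the SDE (\ref{eq_0141}) driven by $B$. Since $a(t, \theta)$ is smooth, hence locally Lipschitz, in $(t, \theta)$ (by essentially the computation of Lemma \ref{rs_bds} applied to the density $p_{t, \theta}$), standard SDE theory gives weak uniqueness for (\ref{eq_0141}), and $\tilde\theta$ agrees in law with $\theta$.

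The main obstacle is bookkeeping rather than conceptual: one must check that the finite-second-moment hypothesis on $\mu$ gives enough integrability of $a(s, \tilde\theta_s)$ along paths for the innovation theorem to apply, and that weak uniqueness for (\ref{eq_0141}) extends down to $t = 0^+$, where the drift $a(0, \theta)$ is just the barycenter of the exponentially tilted measure $e^{\langle \theta, \cdot\rangle}\mu / Z(0, \theta)$ and is still well defined thanks to the finite second moments.
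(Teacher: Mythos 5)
Your proof is correct and follows essentially the same route as the paper: part (ii) via the Cameron--Martin theorem plus Fubini and Bayes' rule, part (i) by identifying $\EE[X \mid \cN_t]$ with the barycenter $a(t,\tilde\theta_t)$ and then invoking a filtering-type argument. The only cosmetic difference is that the paper cites \cite[Theorem 8.4.3]{Oks} for step (i), whereas you unfold that reference into the innovation theorem together with weak uniqueness for the SDE, which is precisely the content (and proof strategy) of the cited result.
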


\begin{proof} We first prove (ii). For $x \in \RR^n$, let $\tau_x: \cV_T \rightarrow \cV_T$ be defined by $\tau_x((W_t)_{t \le T}) = (W_t + tx)_{t \le T}$
so that $\tau(x, \omega) = (x, \tau_x(\omega))$. By Fubini's theorem,
\begin{equation} \nu = \tau_*(\mu \otimes \gamma_T) = \int_{\RR^n} (x,\tau_x)_* \gamma_T  d \mu(x). \label{eq_1104} \end{equation}
This  means that for any test function $g$,
$$\int_{\Omega} g\,d\nu = \int_{\RR^n} \left( \int_{\cV_T} g(x, \tilde{\theta})\, d((\tau_x)_*\gamma_T)(\tilde{\theta}) \right) d \mu(x). $$
Since $\tau_x$ is just a translation in Wiener space by the deterministic function $f_x(t) = tx$,
the Cameron-Martin theorem \cite{CM} yields that the density of $(\tau_x)_* \gamma_T$ with respect to $\gamma_T$
at the point $(\tilde{\theta}_t)_{0 \leq t \leq T} \in \cV_T$ equals
\begin{align}
\frac{d(\tau_x)_* \gamma_T}{d\gamma_T}((\tilde{\theta}_t)_{t\le T}) &= \exp\left(\int_0^T \langle f_x'(t), d \tilde{\theta}_t \rangle - \frac{1}{2} \int_0^T |f_x'(t)|^2\,d t\right) \nonumber \\
&= \exp\left(\int_0^T x \,d \tilde{\theta}_t - \frac{1}{2} \int_0^T |x|^2\,d t\right) = e^{\langle \tilde{\theta}_T, x\rangle - \frac{T |x|^2}{2}}. \label{eq_1103}
\end{align}
It follows from (\ref{eq_1104}) and (\ref{eq_1103}) that
\begin{equation}
\frac{ d \nu }{d (\lambda \otimes \gamma_T) } ( x, \tilde{\theta} ) = p_0(x) \cdot \frac{ d \nu }{d (\mu \otimes \gamma_T) } ( x, \tilde{\theta} ) = p_0(x) e^{\langle \tilde{\theta}_T, x \rangle - T \frac{|x|^2}{2}}.
\label{eq_354} \end{equation}
The probability measure $\nu$ is the joint distribution of the
stochastic process  $$ (X, (\tilde{\theta}_t)_{0 \leq t \leq T}) $$ described in (\ref{eq_1127}).
Therefore, when conditioning on the entire stochastic process $(\tilde{\theta}_t)_{0 \leq t \leq T}$,
it follows from (\ref{eq_354}) that the probability density function of $X$ is proportional to $x \mapsto p_0(x) e^{\langle \tilde{\theta}_T, x \rangle - T \frac{|x|^2}{2}}$
in $\RR^n$.
This completes the proof of (ii).

\medskip We move on to the proof of (i). We endow $\Omega$ with the probability measure $\mu \otimes \gamma_T$,
and assume that $(X, (W_t)_{t \geq 0})$ is distributed according to this measure, while $\tilde{\theta}_t = t X + W_t$. Thus,
\begin{equation} d \tilde{\theta}_t = X dt + d W_t. \label{eq_355} \end{equation}
Write $\cN_t$ for the $\sigma$-algebra generated by $(\tilde{\theta}_s)_{0 \leq s \leq t}$.
Abbreviate $\EE[X | \tilde{\theta}] = \EE[X | \cN_t ](\tilde{\theta})$
for the conditional expectation of $X$ with respect to $\cN_t$, which is a function
of $(\tilde{\theta}_s)_{0 \leq s \leq t}$.
According to (\ref{eq_355}) and \cite[Theorem 8.4.3]{Oks}, the process $(\tilde{\theta}_t)_{0 \leq t \leq T}$ coincides in law with the process $(\theta_t)_{0 \leq t \leq T}$ defined by the initial condition $\theta_0 = \tilde{\theta}_0 = 0$ and the stochastic differential equation
$$
d \theta_t = b(t, \theta_t) \, d t + dW_t,
$$
if the function $b(t, x)$ defined for $0 < t \leq T$ and $x \in \RR^n$ satisfies
\begin{equation}  b(t, \tilde{\theta}_t) = \EE[X | \tilde{\theta} ]  \qquad \forall \tilde{\theta} \in \cV_t.
\label{eq_155} \end{equation}
(To be precise, the statement in \cite[Theorem 8.4.3]{Oks} only treats time-independent diffusions, but
the proofs generalize almost verbatim to the time-dependent case which we need.)
The random variable $\EE[X | \tilde{\theta}]$, viewed as an $\cN_t$-measurable function on $\Omega$,
is the conditional expectation of $X$ given $\tilde{\theta} = (\tilde{\theta}_s)_{0 \leq s \leq t}$. According to (ii),
the conditional distribution of $X$ given $\tilde{\theta}$ is given by the probability density $q_t(x | \tilde{\theta})$
 from (\ref{eq_142}). Hence for any $0 < t < T$ and $\tilde{\theta} \in \cV_t$,
$$ \EE[X | \tilde{\theta}] = \int_{\RR^n} x \cdot q_t(x | \tilde{\theta} )\,dx = \int_{\RR^n} x \cdot p_{t, \tilde{\theta}_t}(x)\,dx = a(t, \tilde{\theta}_t).
$$ We have thus verified condition (\ref{eq_155}) with $b(t,x) = a(t,x)$, completing the proof of (i).
\end{proof}

To reiterate, we have interpreted Eldan's stochastic localization for the measure $\mu$ as the following procedure: a value $x$ is sampled from the distribution $\mu$, and a Brownian motion with constant drift $x$, namely $\theta_t = tx + W_t$, is shown to an observer who knows the distribution $\mu$, but not the value of $x$. From the observer's perspective, $\theta_t$ satisfies the stochastic differential equation (\ref{eq_0141}), and at time $t$, the observer's posterior probability distribution for the hidden drift coefficient $x$ is precisely $p_{t, \theta_t}$. The fact that $p_{t, \theta_t}$ is a martingale now follows immediately from the law of total probability: for $s < t$,
\begin{align}
\EE[p_{t, \theta_t}(x) | \theta_s] &= \EE[p(X = x | \theta_t) | \theta_s] = \int_{\RR^n} p(X = x | \theta_t = \theta) p(\theta_t = \theta | \theta_s)\,d\theta \nonumber \\
&= \int_{\RR^n} p(X = x, \theta_t = \theta | \theta_s)\,d\theta = p(X = x | \theta_s) = p_{s, \theta_s}(x).
\end{align}
In Bayesian terms, this simply means that if we continually obtain information about an unknown random variable $X$ and update our posteriors for $X$ accordingly, our expectation at time $s$ for our estimate of $X$, or any function of $X$, at time $t$ must coincide with our current estimate of $X$.

\begin{remark}\mbox{}
\begin{enumerate}
\item[(i)] A curious property of Eldan's stochastic localization, in the Bayesian interpretation, is that the posterior distribution at time $t$ depends only on $\theta_t$: the full path $(\theta_s)_{0 \leq s \le t}$ contains no more information about $X$ than $\theta_t$ alone. This is a limiting case of an amusing exercise in linear algebra and statistics which we now describe. Suppose that we are given $N$ noisy observations of an unknown quantity $X$,  of the form
    \begin{equation}  X + Z_1, X + \frac{Z_1 + Z_2}{2}, \ldots, X + \frac{Z_1 + \ldots + Z_N}{N} \label{eq_1109} \end{equation}
    where $Z_1,\ldots, Z_N$ are independent, standard Gaussian random variables. Assume that the apriori distribution of $X$ is known to us.
    What is the posterior distribution of $X$ given the $N$ observations in (\ref{eq_1109})? As it turns out, the posterior distribution depends only on the last of these $N$ observations, for which the Gaussian noise is of the smallest variance. The first $N-1$ observations are completely useless in this context.
\item[(ii)] A suitably generalized version of this interpretation applies to the general stochastic localization process with a control matrix $C_t = C(t, \theta_t)$, as defined by \cite{LV2}: a random variable $X$ is drawn from $\mu$ as above, but instead of a Brownian motion with drift $X$, what the observer sees is an It\^o process defined by the SDE $d\theta_t = C(t, \theta_t) X\,dt + C(t, \theta_t)^{\frac{1}{2}}\,dW_t$. Again, $p_t$ represents the observer's posterior distribution for $X$ given the observation of $\theta_t$ up to time $t$. To prove that this description corresponds to the definition of the process in \cite{LV2} one repeats the above argument using Girsanov's theorem, rather than the Cameron-Martin theorem. The ``path-independence'' property of the posteriors from the previous remark does not hold in this case.
\end{enumerate}
\end{remark}

\subsection{Time inversion}

Let us now explain the relationship between the tilt process in its Bayesian interpretation and our work in \S 2.
A well-known identity for Brownian motion is the time-inversion property: suppose that  $(W_t)_{t \in [0, \infty)}$ is a standard Brownian motion in $\RR^n$ with $W_0 = 0$. Define $(\tilde W_s)_{s \in [0, \infty)}$ by $$ \tilde W_s = s W_{\frac{1}{s}} $$
and $\tilde W_0 = 0$. Then $(\tilde W_s)_{s \in [0, \infty)}$ is a standard Brownian motion as well.
Consequently, from (\ref{eq_1127}) we see that
\begin{equation}  Y_s := s \tilde{\theta}_{1/s} = X + \tilde{W}_s. \label{eq_1157} \end{equation}
Recalling that the tilt process $(\theta_t)_{t \geq 0}$ coincides in law with $(\tilde{\theta}_t)_{t \geq 0}$,
we conclude from (\ref{eq_1157}) that the tilt process coincides in law with the time inversion of a Brownian motion with a starting point drawn from the distribution $\mu$.

\medskip Applying this time inversion, we treat the time-inverted tilt process $(Y_s)_{s \geq 0}$ as
just a Brownian motion with a random starting point. Working with it requires nothing more than the explicit expression for the Euclidean heat kernel; for instance, the distribution of $Y_s$ is given by the probability density function $\rho_s = P_s \rho$ with $\rho = p_0$.
Given a function $\vphi$ on $\RR^n$ and $t > 0$, the random variable $$ \int_{\RR^n} \vphi p_t $$ associated to the tilt process coincides in law
with the distribution of $Q_s \vphi$ under the measure $\mu_s$, for $s = 1/t$. Moreover,
\begin{equation}  Q_{s} \vphi(y) = \EE \left[ \vphi(X) | \theta_t = \theta\right] \qquad \qquad \text{for} \ s = 1/t > 0, \theta = ty \in \RR^n. \label{eq_1124} \end{equation}
It is this elementary, ``functional analytic'' perspective on the measures $p_{t, \theta}$ -- or, in the new variables, $p_{s, y}$ -- that is taken in \S \ref{sec2}, which makes no explicit use of stochastic localization, pathwise analysis, martingales or stochastic calculus at all.

\subsection{F\"ollmer drift as a ``time-compressed'' version of stochastic localization}

F\"ollmer drift is a well-known stochastic process which couples between an absolutely continuous measure $\mu$ and Wiener measure on path space over a \textit{finite} time interval, without loss of generality $[0, 1]$. It is the same process
referred to as the ``$h$-process'' in Cattiaux and Guillin \cite{CG}, because of its relation to Doob's $h$-transform.

\medskip In brief, the F\"ollmer drift associated to $\mu$ is a Brownian motion conditioned to have law $\mu$ at time $t = 1$.
The measure $\cP^\mu$ on $\cV_1 = C_0[0, 1]$ defining the F\"ollmer drift of $\mu$ is defined as the measure having Radon-Nikodym derivative
\begin{equation} \frac{d \cP^\mu}{\gamma_1}(W) = \frac{d\mu}{d\gamma}(W_1) \qquad \qquad \qquad (W = (W_t)_{0 \leq t \leq 1} \in \cV_1), \label{eq_253} \end{equation}
where $\gamma_1$ on the left side of (\ref{eq_253}) is the Wiener measure on $C_0[0,1]$, while $\gamma$ on the right side
of (\ref{eq_253}) is the standard Gaussian measure in $\RR^n$. The F\"ollmer drift $\cP^\mu$ turns out to have a certain energy-minimizing property, and its energy is precisely twice the relative entropy $H(\mu | \gamma)$, properties which make it quite useful for proving functional inequalities; see, e.g., \cite{ELS, lehec}.

\medskip We can interpret F\"ollmer drift in a manner completely analogous to the tilt process: let $X$ be a random variable drawn from $\mu$ and let $(B_t)_{0 \leq t \leq 1}$ denote an independent standard Brownian bridge on $[0, 1]$. Then the law of the process
\begin{equation}  X_t = tX + B_t \qquad \qquad (0 \leq t \leq 1) \label{eq_259} \end{equation}
is precisely that of the F\"ollmer drift associated to $\mu$.
Moreover, just as above, one may consider an observer who sees $X_t$ but not $X$ and define posterior probability distributions for $X$ given $(X_s)_{s \le t}$. These posterior probability distributions are the random measures $\mu_t$ in \S 3 of \cite{ELS}, in a slightly different normalization (in \cite{ELS} the measure $\mu_t$ is the posterior probability distribution of $(X - X_t) / \sqrt{1-t}$ given $X_t$, rather
than the posterior probability distribution of $X$ itself given $X_t$).

\medskip
In fact, there is an even closer relationship between F\"ollmer drift $X_t$ and the tilt process $\theta_t$ of Eldan's stochastic localization, which manifests in two separate ways. First of all, for $t \le 1$, we may write
\begin{equation}  \theta_t = tX + W_t = t(X + W_1) + (W_t - t W_1). \label{eq_312} \end{equation} Note that $W_1$ is independent of $W_t - t W_1$ as these are
jointly Gaussian, centered and $\EE W_1 (W_t - t W_1) = 0$.
Recall that $B_t = W_t - tW_1$  is one way to construct a Brownian bridge. From (\ref{eq_259}) and (\ref{eq_312})
we see that the tilt process $(\theta_t)_{t \in [0, 1]}$ has the law of the F\"ollmer drift for the measure associated to $X + W_1$, namely $\mu * \gamma$. In the same fashion, one sees that for any $T > 0$, the process $(\theta_t)_{t \in [0, T]}$ is identical in law to the F\"ollmer drift of $\mu * \gamma_T$, with the time interval rescaled to $[0, T]$.

\medskip Another way to construct a standard Brownian bridge from a standard Brownian motion is by ``time compression'': the process
$$ \tilde{B}_t = (1 - t) W_{\frac{t}{1 - t}} \qquad \qquad (0 \leq t \leq 1) $$
coincides in law with the standard Brownian bridge. By inverting this operation, we can construct a Brownian motion from a Brownian bridge: $W_t = (1 + t) \tilde{B}_{\frac{t}{1 + t}}$.
Hence, the F\"ollmer drift for $\mu$ and the tilt process of Eldan's stochastic localization satisfy the reciprocal relations
$$ X_t \simeq (1 - t) \theta_{\frac{t}{1 - t}} \qquad \text{and} \qquad \theta_t \simeq  (1 + t) X_{\frac{t}{1 + t}}, $$
where $\simeq$ means ``coincides in law''.
This follows from the defining formulas $\theta_t = tX + W_t$, $X_t = tX + B_t$ and the corresponding relations for $B_t$ and $W_t$. Thus, F\"ollmer drift is simply a time-compressed version of the tilt process.

\appendix

\section{Discreteness of the spectrum for rapidly decreasing log-concave densities}\label{app:log_conc_disc}

The goal of this appendix is to prove the following proposition:

\begin{proposition}\label{prop:log_conc_disc} Let $\mu$ be a log-concave probability measure on $\mathbb R^n$ with smooth, positive density $\rho = e^{-V}$ such that $\frac{V(x)}{|x|} \rightarrow \infty$ as $x \rightarrow \infty$. Then the spectrum of $L_\mu$ is discrete.
\end{proposition}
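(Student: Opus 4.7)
The plan is to show that $L_\mu$ has discrete spectrum by establishing a Hardy-type Poincar\'e inequality for functions supported outside large balls, with constant tending to zero as the radius grows. Combined with the Rellich--Kondrachov theorem on balls, this yields compactness of the embedding $H^1(\mu) \hookrightarrow L^2(\mu)$, hence discreteness of the spectrum of $L_\mu$.

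The key observation, using both convexity and superlinear growth of $V$, is that the radial derivative $h(x) := \nabla V(x) \cdot x/|x|$ satisfies $h(x) \to \infty$ as $|x| \to \infty$. Indeed, the subgradient inequality $V(0) \geq V(x) + \nabla V(x) \cdot (0 - x)$ gives $\nabla V(x) \cdot x \geq V(x) - V(0)$, so $h(x) \geq (V(x) - V(0))/|x| \to \infty$ by hypothesis. Let $h_R := \inf_{|x| \geq R} h(x)$, so that $h_R \to \infty$. For $u \in C_c^\infty(\RR^n \setminus \overline{B_R})$, I would integrate by parts against $e^{-V}$, using that $u^2 x/|x|$ is smooth on $\supp u$ and $\div(x/|x|) = (n-1)/|x|$ away from the origin:
$$\int u^2 h \, d\mu = \int \div\!\bigl(u^2 \tfrac{x}{|x|}\bigr) d\mu = \int 2u \nabla u \cdot \tfrac{x}{|x|}\,d\mu + \int \tfrac{n-1}{|x|} u^2 \, d\mu.$$
Bounding the left side below by $h_R \|u\|_{L^2(\mu)}^2$, the right side by Cauchy--Schwarz and $1/|x| \leq 1/R$ on $\supp u$, and rearranging yields, for $R$ large,
$$\|u\|_{L^2(\mu)}^2 \leq \frac{4}{(h_R - (n-1)/R)^2} \|\nabla u\|_{L^2(\mu)}^2,$$
whose prefactor tends to zero as $R \to \infty$.

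Finally, given a sequence $(f_k)$ bounded in $H^1(\mu)$, I would apply the above inequality to $f_k \chi_R$, where $\chi_R$ is a smooth cutoff vanishing on $B_R$, equal to $1$ outside $B_{2R}$, with $|\nabla \chi_R| \leq 2/R$; this shows $\int_{|x| > 2R} f_k^2 \, d\mu \to 0$ uniformly in $k$. Together with Rellich--Kondrachov on each $B_{2R}$ (valid since $\mu$ has smooth positive density bounded above and away from zero on $B_{2R}$) and a diagonal extraction, this produces a subsequence converging in $L^2(\mu)$; hence $H^1(\mu) \hookrightarrow L^2(\mu)$ is compact and the spectrum of $L_\mu$ is discrete. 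I expect the principal obstacle is choosing the right functional inequality at infinity: the naive Schr\"odinger reduction to $-\Delta + (|\nabla V|^2/4 - \Delta V/2)$ (cf.\ the remark after Theorem~\ref{cor_612}) does not obviously produce a potential going to infinity under our weaker hypothesis, since $\Delta V$ could a priori dominate $|\nabla V|^2$ on small sets. The radial-derivative route above sidesteps this by working directly with a quantity that convexity forces to be large in every direction.
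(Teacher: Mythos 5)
Your proposal is correct, and it takes a genuinely different route from the paper's. Both proofs reduce to the standard criterion (essentially the one in \cite[\S 4.10]{BGL}) that the spectrum is discrete if one has a Poincar\'e inequality outside large balls with constant tending to infinity. The paper establishes this by passing to polar coordinates, reducing to a one-dimensional weighted inequality on each ray via the Muckenhoupt criterion, and invoking Dini's theorem to upgrade the pointwise divergence $V(Ru)/R \to \infty$ to a uniform statement over $u \in S^{n-1}$. Your argument instead integrates by parts directly against the radial vector field $x/|x|$, giving a clean one-step Hardy-type estimate $\|u\|_{L^2(\mu)}^2 \le \frac{4}{(h_R - (n-1)/R)^2}\|\nabla u\|_{L^2(\mu)}^2$ for $u$ supported outside $B_R$, where $h_R = \inf_{|x|\ge R} \nabla V(x)\cdot x/|x|$ tends to infinity. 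Both approaches hinge on the same elementary consequence of convexity plus superlinear growth, namely that the radial derivative of $V$ tends to infinity, but you exploit it via the subgradient inequality $\nabla V(x)\cdot x \ge V(x) - V(0)$ rather than via monotonicity in $R$ and Dini. Your route is more elementary and self-contained (no Muckenhoupt, no polar-coordinate reduction), and arguably more transparent: it makes visible exactly which quantity controls the constant. The price you pay is having to spell out the compact-embedding / Rellich argument yourself rather than citing the criterion as a black box, but that part is standard and your sketch of it (cutoff, uniform tail estimate, diagonal extraction) is fine. Your closing remark about the Schr\"odinger reduction is a useful observation: the potential $|\nabla V|^2/4 - \Delta V/2$ need not go to infinity under this weaker hypothesis, which is exactly why both the paper and you avoid that route here.
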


For $A \subseteq \RR^n$ we write $C_c^{\infty}(A)$ for the class of smooth, compactly supported functions in $\RR^n$ that are supported in the set $A$.
As explained in \cite[\S 4.10]{BGL}, in order to prove Proposition \ref{app:log_conc_disc} it suffices to show the following:

\begin{enumerate}
\item[(*)] For any $a > 0$ there exists $r > 0$ such that for any $f \in C_c^{\infty}(\RR^n \setminus B_r)$,
$$ \int_{\RR^n} |\nabla f|^2 d \mu \geq a \cdot \int_{\RR^n} f^2 d \mu. $$
Here $B_r = \{ x \in \RR^n \, ; \, |x| \leq r \}$.
\end{enumerate}

Consider first the one-dimensional case in which $\mu$ is supported on a half-line. Thus $d \mu = \rho(x) dx =e^{-W(x)} dx$ is a measure on $[0, \infty)$
with $W: [0, \infty) \rightarrow \RR$ smooth and convex.
We will apply the Muckenhoupt criterion (\cite{M}; see also \cite[\S 4.5.1]{BGL}), which we state
as the following lemma:

\begin{lemma} Let $\rho: [0, \infty) \rightarrow (0, \infty)$ be such that $C := \sup_{r > 0} \int_r^\infty \rho \int_0^r \frac{1}{\rho} < \infty$. Then for every smooth, compactly supported function $f: [0, \infty) \rightarrow \RR$,
$$  \int_0^{\infty} f^2\,\rho \le  4C \int_0^{\infty} (f')^2\,\rho. $$
\end{lemma}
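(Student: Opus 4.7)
The lemma is a classical Hardy--Muckenhoupt inequality, and I expect the hypothesis implicitly requires $f(0)=0$ (e.g.\ that $f$ is compactly supported in $(0,\infty)$); otherwise taking $f\equiv 1$ near the origin would give an immediate counterexample. My plan is to rewrite the inequality as an $L^2\to L^2$ operator-norm estimate for a Hardy-type integral operator with an explicit kernel, and then verify it via Schur's test with a suitable weight pair.

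Writing $g=f'$ and setting $h(t)=g(t)\sqrt{\rho(t)}$, the desired inequality becomes $\|S\|_{L^2\to L^2}^2 \leq 4C$ for the operator
$$Sh(x)=\sqrt{\rho(x)}\int_0^x \frac{h(t)}{\sqrt{\rho(t)}}\,dt,$$
whose nonnegative kernel is $K(x,t)=\sqrt{\rho(x)/\rho(t)}\,\mathbf{1}_{\{t\leq x\}}$. Abbreviate $A(x)=\int_x^\infty \rho$ and $B(x)=\int_0^x \rho^{-1}$, so the hypothesis reads $\sup_x A(x)B(x)=C$. I would apply Schur's test with the weights
$$\phi(t)=\bigl(B(t)\rho(t)\bigr)^{-1/2}, \qquad \psi(x)=\bigl(B(x)\rho(x)\bigr)^{1/2},$$
the choice being forced by the observation that $\phi(t)/\sqrt{\rho(t)}=B(t)^{-1/2}B'(t)$ is an exact derivative, making the first Schur integral explicitly computable.

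Indeed,
$$\int_0^x K(x,t)\phi(t)\,dt=\sqrt{\rho(x)}\int_0^x B(t)^{-1/2}B'(t)\,dt=2\sqrt{\rho(x)B(x)}=2\psi(x),$$
so one half of Schur's test holds with $M_1=2$. For the other half I need to bound
$$\int_t^\infty K(x,t)\psi(x)\,dx=\frac{1}{\sqrt{\rho(t)}}\int_t^\infty \rho(x)B(x)^{1/2}\,dx$$
by $2C\phi(t)$; equivalently, I need $\int_t^\infty \rho B^{1/2}\,dx \leq 2C/\sqrt{B(t)}$. Integrating by parts via $\rho=-A'$ contributes a boundary term $A(t)B(t)^{1/2}$, which is at most $C/\sqrt{B(t)}$ because $AB\leq C$, plus an interior term $\tfrac{1}{2}\int_t^\infty A/(\rho B^{1/2})\,dx$. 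Using $A\leq C/B$ together with the identity $(B^{-1/2})'=-1/(2\rho B^{3/2})$ bounds the latter by another $C/\sqrt{B(t)}$, yielding $M_2=2C$ and hence $\|S\|^2 \leq M_1M_2 = 4C$.

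The principal obstacle is locating the correct Schur weights: naive choices such as $\phi=\rho^{-1/2}$, $\psi=\rho^{1/2}$ produce $\sqrt{\rho(x)}\,B(x)$ in place of $2\psi(x)$ and so yield an unbounded Schur constant whenever $B$ is unbounded. The extra factor of $B^{1/2}$ is essential, and the Muckenhoupt hypothesis $AB\leq C$ enters in exactly two places---the boundary term and the interior term of the integration by parts for $\int \rho B^{1/2}$. A minor secondary issue is ensuring that the boundary term at $x=\infty$ vanishes, which follows from $AB\leq C$ together with $B(x)\to\infty$; in the degenerate case where $B$ is bounded one may truncate the support of $f$ or invoke a direct Poincar\'e-type argument on a finite interval.
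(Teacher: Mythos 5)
Your proof is correct, and it is worth noting that the paper does not actually prove this lemma: it is stated as ``the Muckenhoupt criterion'' and the proof is deferred to the references \cite{M} and \cite[\S 4.5.1]{BGL}. You have therefore supplied a genuine proof where the paper only cites. Your Schur-test argument with weights $\phi = (B\rho)^{-1/2}$, $\psi = (B\rho)^{1/2}$ is in substance the standard proof of the $p=2$ Muckenhoupt criterion: it is equivalent to the classical Cauchy--Schwarz argument with intermediate weight $B^{1/4}$ appearing in Muckenhoupt's original paper, since Schur's test is itself two applications of Cauchy--Schwarz. All three computations check out: the first Schur integral telescopes to $2\sqrt{\rho(x)B(x)}$ since $B(0)=0$; the boundary term $A(t)B(t)^{1/2}$ is controlled by $AB\le C$; and the interior term integrates exactly via $(B^{-1/2})'=-\tfrac{1}{2}\rho^{-1}B^{-3/2}$, giving $M_1 M_2 = 4C$.

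Two smaller remarks. First, your observation that the lemma implicitly requires $f(0)=0$ (equivalently $f$ supported in $(0,\infty)$) is correct and important: without it, taking $f\equiv 1$ on a long initial interval and decaying to $0$ afterward makes the ratio $\int f^2\rho / \int (f')^2\rho$ arbitrarily large (e.g.\ with $\rho=e^{-x}$, for which $C=1$). This is consistent with the application in the appendix, where the lemma is invoked only for radial restrictions of functions in $C_c^\infty(\RR^n\setminus B_R)$, which vanish identically near the left endpoint. Second, the ``degenerate case'' you flag at the end does not require a separate argument: if $B(\infty)<\infty$ the boundary term at infinity already vanishes because $A(\infty)=0$ (integrability of $\rho$ being forced by $C<\infty$), and the interior estimate still closes with $B(t)^{-1/2}-B(\infty)^{-1/2}\le B(t)^{-1/2}$, so your main argument covers both cases uniformly.
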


Let $x_0 > 0$ be such that $a := W'(x_0) > 0$. Then $W'(x) \ge a$ for all $x > x_0$ by convexity. Hence for any $r, x > x_0$,
\begin{align*}\rho(r) &\le e^{-a(r - x)} \rho(x) \quad \text{for $x < r$,} \\
\rho(x) &\le e^{-a(x - r)} \rho(r) \quad \text{for $x > r$.}
\end{align*}
Therefore, for any $r > x_0$,
\begin{align}
\int_{x_0}^r \frac{1}{\rho(x)}\,dx &\le \frac{1}{\rho(r)} \int_{x_0}^r e^{-a(r - x)}\,dx \le \frac{1}{\rho(r)} \cdot \frac{1}{a}, \\
\int_r^\infty \rho(x)\,dx &\le \rho(r) \int_r^\infty e^{-a(x - r)}\,dx \le \rho(r) \cdot \frac{1}{a}.
\end{align}
Thus we obtain $\sup_{r > x_0} \int_r^\infty \rho \int_{x_0}^r \frac{1}{\rho} \le \frac{1}{a^2}$, so Muckenhoupt's criterion yields that for any
$f \in C_c^{\infty}([x_0, \infty))$,
\begin{equation}  \int_{x_0}^{\infty} (f')^2 \,d\mu \ge \frac{a^2}{4} \int_{x_0}^{\infty} f^2 \,d\mu, \label{eq_449} \end{equation}
whenever $a = W'(x_0) > 0$.

\medskip So much for the one-dimensional case. Now let $d\mu = e^{-V}\,dx$ be an $n$-dimensional log-concave measure, and consider the family of functions $f_R: S^{n - 1} \rightarrow \mathbb R$ defined by $f_R(u) = \frac{V(Ru) - V(0)}{R}$. By convexity, $f_R$ is monotone increasing in $R$, and by assumption $f_R$ converges pointwise to infinity. Hence, applying Dini's theorem, we see that $f_R$ converges uniformly to $\infty$.
Denote $V_u(r) = V( r u )$ for $u \in S^{n-1}$ and $r \geq 0$. By convexity, $$ V_u'(r) \geq f_R(u). $$
Hence for every $a > 0$ there exists $\tilde{R} > 0$ such that $V_u'(r) \ge a$ for all $r \ge \tilde{R}$ and $u \in S^{n-1}$.
Denoting $W_u(r) = V_u(r) - (n-1) \log r$, we see
we see that $W_u$ is convex in $(0, \infty)$
and that for any $a > 0$ there exists $R = R(a) > 0$ such that $W_u'(r) \geq a$ for all $r \geq R$ and $u \in S^{n-1}$.
By integrating in polar coordinates and using (\ref{eq_449}) we conclude  that for any $a > 0$ and $f \in C_c^\infty(\mathbb R^n \setminus  B_R)$,
denoting $f_u(r) = f(ru)$,
\begin{align*}
\|\nabla f\|^2_{L^2(\mu)} & \geq \int_{S^{n-1}} \int_0^{\infty} r^{n-1} (f_u'(r))^2 e^{-V_u(r)} dr d u
 \geq \int_{S^{n-1}} \left( \int_{R}^{\infty} (f_u'(r))^2 e^{-W_u(r)} dr \right) d u \\ &
 \geq \frac{a^2}{4} \int_{S^{n-1}} \left( \int_R^{\infty} f_u(r)^2 e^{-W_u(r)} dr \right) d u = \frac{a^2}{4} \|f\|^2_{L^2(\mu)}.
\end{align*}
We have thus verified condition (*) above, completing the proof of Proposition \ref{prop:log_conc_disc}.

\medskip
\noindent Department of Mathematics, Weizmann Institute of Science, Rehovot 76100, Israel. \\
 {\it e-mails:} \verb"boaz.klartag@weizmann.ac.il, eli.putterman@weizmann.ac.il"


\end{document}